\pgfplotsset{compat=1.15}
\newtheorem{theorem}{Theorem}[section]
\newtheorem{lemma}[theorem]{Lemma}
\theoremstyle{definition}
\newtheorem{definition}[theorem]{Definition}
\newtheorem{example}[theorem]{Example}
\numberwithin{equation}{section}
\begin{document}
\definecolor{ffqqqq}{rgb}{1,0,0}
\definecolor{qqqqff}{rgb}{0,0,1}

\title{Limit sets, internal chain transitivity and orbital shadowing of tree-shifts defined on Markov-Cayley trees}

\author[Jung-Chao Ban]{Jung-Chao Ban}
\address[Jung-Chao Ban]{Department of Mathematical Sciences, National Chengchi University, Taipei 11605, Taiwan, ROC.}
\address{Math. Division, National Center for Theoretical Science, National Taiwan University, Taipei 10617, Taiwan. ROC.}
\email{jcban@nccu.edu.tw}

\author[Nai-Zhu Huang]{Nai-Zhu Huang}
\address[Nai-Zhu Huang]{Department of Mathematical Sciences, National Chengchi University, Taipei 11605, Taiwan, ROC.}
\email{naizhu7@gmail.com}

\author[Guan-Yu Lai]{Guan-Yu Lai}
\address[Guan-Yu Lai]{Department of Mathematical Sciences, National Chengchi University, Taipei 11605, Taiwan, ROC.}
\email{gylai@nccu.edu.tw}

\keywords{limit sets, shadowing property, tree-shifts.}
\subjclass[2020]{Primary 37B10; Secondary 37B20.}



\begin{abstract}  
In this paper, we introduce the concepts of $\omega$-limit sets and pseudo orbits for a tree-shift defined on a Markov-Cayley tree, extending the results of tree-shifts defined on $d$-trees [5,6]. Firstly, we establish the relationships between $\omega$-limit sets and we introduce a modified definition of $\omega$-limit set based on complete prefix sets (Theorems 1.4 and 1.9). Secondly, we introduce the concept of projected pseudo orbits and investigate the concept of the shadowing property (Theorems 1.12 and 1.14).
\end{abstract}

\maketitle


\section{Introduction}

\subsection{Motivations}

The objective of this article is to study the $\omega $-limit set and the
shadowing properties for the shift spaces defined on Markov-Cayley trees.
Before we present the main results of our investigation, we highlight the
motivation behind this study.

\textbf{1}. ($\omega $-limit set) If $(X,T)$ is a dynamical system, the $\omega$-limit set of $X$ is undoubtedly the most important set to capture the long-term behavior, recurrence properties and the existence of attractors of the system. In this article, we will be focusing on the $\omega$-limit set of the shift space defined on multidimensional lattices.
In \cite{souza2012limit}, Souza first provided the concepts of the $\omega$-limit set of $G$ actions where $G$ is a group or a monoid. Motivated by the work of Souza, Meddaugh and Raines \cite{meddaugh2014structure} introduced the concepts of an $\omega$-limit set of shifts defined on $\mathbb{Z}^{d}$ and studied the fundamental properties of an $\omega$-limit set and
established the shadowing properties. Later, Binder \cite{binder2019limitPH}, Binder and Meddaugh \cite{binder2019limit} give various types of definitions
for the $\omega$-limit sets of a shift space defined on the free semi-group\footnote{That is, the conventional $d$ tree.} with $d$ generators (briefly, we call it the \emph{shift on the }$d$ \emph{tree}), namely, the $\omega$-limit sets $\omega^{d}(t)$, $\omega_{p}^{d}(t)$, $\omega _{F_{p}}^{d}(t)$ and $\omega_{CPS}^{d}(t)$ where $t$ is an element of the given shift space. The authors identify the connections between these $\omega $-limit sets and
develop some fundamental properties of them. However, we found out that not all the definitions of the $\omega $-limit sets for shifts on the $d$ tree are valid if the underlined lattice, i.e., the $d$ tree, is changed to the
Markov-Cayley tree. Thus, we present a modified definition for the $\omega $-limit set $\omega_{CPS}^{M}(t)$ of a shift on the Markov-Cayley tree $T_{M}$ and investigate the connections among these $\omega $-limit sets (Theorem \ref{Thm: 1}). As it turns out, the outcomes of the shift on the Markov-Cayley tree are vastly different from those on the $d$ tree (Example \ref{ex2}).

\textbf{2}. (Shadowing properties) In a dynamical system, the shadowing property stipulates that any approximate orbit can be traced by a real one. This is significant because rounding errors are common problems in floating-point calculations. We will not attempt to review the extensive literature here, referring only to (\cite{barwell2012shadowing,brian2015chain,bruin2022topological, good2018orbital,good2020shadowing,kulczycki2014almost,kwietniak2012note, li2013shadowing,
meddaugh2013shadowing,
oprocha2008shadowing, pilyugin2006shadowing,pilyugin2003shadowing}) for background and references. In addition, for a deeper discussion of the shadowing properties for shifts on multi-dimensional
lattices, for instance, the $\mathbb{Z}^{d}$, $\mathbb{N}^{d}$ with $d\geq 1$ or some abelian groups, we refer the reader to \cite{oprocha2008shadowing,pilyugin2003shadowing}.  

We confine our discussion to the shadowing property for the shift on the $d$ tree or Markov-Cayley tree based on the above discussion. In \cite{binder2019limit, bucki2024stability}, the authors give the definitions for the (asymptotically) shadowing property for the shift on $d$ tree, and show that a shift on $d$ tree is subshift of finite type (SFT) if and only if the
shift has the shadowing property. This extends the previously well-known result of shifts defined on $\mathbb{N}$ to those defined on the $d$ tree. Furthermore, it is proved that an $m$-step SFT on the $d$ tree has the asymptotically $2^{-(m+1)}$-shadowing property \cite{binder2019limit}. We offer modified definitions
of the (asymptotically) shadowing properties for shifts on Markov-Cayley trees due to the same reason mentioned at the above paragraph. SFTs are characterized as those with the shadowing property based on the modified definition of the shadowing property (Theorem \ref{Thm: 2}). In addition, we
obtain that an $m$-step SFT on the Markov-Cayley tree has an asymptotically $2^{-(m+1)}$-shadowing property. It is worth noting the previous results for shifts on the $d$ tree are extended to a wider range of multi-dimensional shifts because the $d$ tree is a special type of Markov-Cayley tree
(Theorem \ref{Thm: 3}).

We stress that the shifts on Markov-Cayley trees are not invariant, which is a major obstacle to this investigation. This is due to the fundamental difference between the $d$ tree and Markov-Cayley trees. The Markov-Cayley tree is not a type of an abelian group, and the method of shifts on $\mathbb{Z}^{d}$ is not applicable to this class of shifts. In the following section, the formal definitions of the shift on Markov-Cayley trees are introduced.

\subsection{Tree-shifts on Markov-Cayley trees}
Let $\Sigma=\{g_{1},\ldots ,g_{d}\}$ be a finite set. Suppose $T$ is an infinite, locally finite, connected graph with generator set $\Sigma$, without loops and with a distinguished point $\epsilon $. The graph $T$ is also called a \emph{tree}. Let $T$ be a tree and $g\in T$. The \emph{follower set} $F_{T}(g)$ of $g$ is defined as 
\[
F_{T}(g)=\{h\in T:gh\in T\}.
\]
The focus of this article is on a wide range of trees, specifically, Markov-Cayley trees as we define them below. Let $M$ be a $d\times d$ 0-1 matrix indexed by $\Sigma $. The associated \emph{Markov-Cayley tree }$T_{M}$ is defined by 
\[
T_{M}=\{\epsilon \}\cup \Sigma \cup \bigcup\limits_{n=2}^{\infty }\left\{
g_{i_{1}}g_{i_{2}}\cdots g_{i_{n}}:M(g_{i_{j}},g_{i_{j+1}})=1,~\forall
1\leq j\leq n-1\right\} \text{.} 
\]
If $M$ is a $d\times d$ full matrix, i.e., all entries of $M$ are $1^{\prime }$s, then denote $T_{M}$ briefly by $T_{d}$. Let $T$ be a tree, and we denote by 
\[
\mathcal{I=I}_{T}=\{F_{T}(g):g\in T\}
\]
the \emph{family of follower sets of }$T$ and assume that $\mathcal{I}=\{\eta _{1},\ldots ,\eta _{\left\vert \mathcal{I}\right\vert }\}$ with $\left\vert \mathcal{I}\right\vert <\infty $ throughout this article. For $\eta \in \mathcal{I}$, define 
\[
T^{(\eta )}=\{g\in T:F_{T}(g)=\eta \}\text{, }\Delta _{m}^{(\eta )}=\{g\in\eta :0\leq \left\vert g-\epsilon \right\vert \leq m\},
\]
where $|g-\epsilon|:=n$ if $g=g_{i_1}g_{i_2}\cdots g_{i_n}$, and $|g-\epsilon|:=0$ if $g=\epsilon$.

\begin{example}
 For any $d\times d$ 0-1 matrix $M$, we have $\mathcal{I}_{T_M} \subseteq \{T_M, F_{T_M}(g_1), ..., F_{T_M}(g_d)\}$.
\begin{enumerate}
    \item  Let $M$ be a $2\times 2$ full one matrix, $T_M=T_2$ (a conventional 2-tree). Since $F_{T_2}(g_1)=F_{T_2}(g_2)=T_2$, we have $\mathcal{I}=\{\eta\}$, where $\eta=T_2$. Then, $T^{(\eta)}=T_2$.

 \begin{figure}[H]
     \centering
     \begin{subfigure}[b]{0.45\textwidth}
         \centering
              \begin{tikzpicture}[line cap=round,line join=round,>=triangle 45,x=0.6cm,y=0.7cm]
\clip(-2,8.5) rectangle (6.5,11.5);
\draw [line width=0.5pt] (2.25,11)-- (0.25,10.5);
\draw [line width=0.5pt] (2.25,11)-- (4.25,10.5);
\draw [line width=0.5pt] (0.25,10.5)-- (-0.75,10);
\draw [line width=0.5pt] (0.25,10.5)-- (1.25,10);
\draw [line width=0.5pt] (4.25,10.5)-- (3.25,10);
\draw [line width=0.5pt] (4.25,10.5)-- (5.25,10);
\draw [line width=0.5pt] (-0.75,10)-- (-1.25,9.5);
\draw [line width=0.5pt] (-0.75,10)-- (-0.25,9.5);
\draw [line width=0.5pt] (1.25,10)-- (0.75,9.5);
\draw [line width=0.5pt] (1.25,10)-- (1.75,9.5);
\draw [line width=0.5pt] (3.25,10)-- (2.75,9.5);
\draw [line width=0.5pt] (3.25,10)-- (3.75,9.5);
\draw [line width=0.5pt] (5.25,10)-- (4.75,9.5);
\draw [line width=0.5pt] (5.25,10)-- (5.75,9.5);
\draw [line width=0.5pt] (-1.25,9.5)-- (-1.5,9);
\draw [line width=0.5pt] (-1.25,9.5)-- (-1,9);
\draw [line width=0.5pt] (-0.25,9.5)-- (-0.5,9);
\draw [line width=0.5pt] (-0.25,9.5)-- (0,9);
\draw [line width=0.5pt] (0.75,9.5)-- (0.5,9);
\draw [line width=0.5pt] (0.75,9.5)-- (1,9);
\draw [line width=0.5pt] (1.75,9.5)-- (1.5,9);
\draw [line width=0.5pt] (1.75,9.5)-- (2,9);
\draw [line width=0.5pt] (2.75,9.5)-- (2.5,9);
\draw [line width=0.5pt] (2.75,9.5)-- (3,9);
\draw [line width=0.5pt] (3.75,9.5)-- (3.5,9);
\draw [line width=0.5pt] (3.75,9.5)-- (4,9);
\draw [line width=0.5pt] (4.75,9.5)-- (4.5,9);
\draw [line width=0.5pt] (4.75,9.5)-- (5,9);
\draw [line width=0.5pt] (5.75,9.5)-- (5.5,9);
\draw [line width=0.5pt] (5.75,9.5)-- (6,9);
\draw [color=black] (2.25,11) node[] {\fontsize{8 pt}{0pt}\selectfont{$\bullet$}};
\draw [color=black] (0.25,10.5) node[] {\fontsize{8 pt}{0pt}\selectfont{$\bullet$}};
\draw [color=black] (4.25,10.5) node[] {\fontsize{8 pt}{0pt}\selectfont{$\bullet$}};
\draw [color=black] (-0.75,10) node[] {\fontsize{8 pt}{0pt}\selectfont{$\bullet$}};
\draw [color=black] (1.25,10) node[] {\fontsize{8 pt}{0pt}\selectfont{$\bullet$}};
\draw [color=black] (3.25,10) node[] {\fontsize{8 pt}{0pt}\selectfont{$\bullet$}};
\draw [color=black] (5.25,10) node[] {\fontsize{8 pt}{0pt}\selectfont{$\bullet$}};
\draw [color=black] (-1.25,9.5) node[] {\fontsize{8 pt}{0pt}\selectfont{$\bullet$}};
\draw [color=black] (-0.25,9.5) node[] {\fontsize{8 pt}{0pt}\selectfont{$\bullet$}};
\draw [color=black] (0.75,9.5) node[] {\fontsize{8 pt}{0pt}\selectfont{$\bullet$}};
\draw [color=black] (1.75,9.5) node[] {\fontsize{8 pt}{0pt}\selectfont{$\bullet$}};
\draw [color=black] (2.75,9.5) node[] {\fontsize{8 pt}{0pt}\selectfont{$\bullet$}};
\draw [color=black] (3.75,9.5) node[] {\fontsize{8 pt}{0pt}\selectfont{$\bullet$}};
\draw [color=black] (4.75,9.5) node[] {\fontsize{8 pt}{0pt}\selectfont{$\bullet$}};
\draw [color=black] (5.75,9.5) node[] {\fontsize{8 pt}{0pt}\selectfont{$\bullet$}};
\draw [color=black] (-1.5,9) node[] {\fontsize{8 pt}{0pt}\selectfont{$\bullet$}};
\draw [color=black] (-1,9) node[] {\fontsize{8 pt}{0pt}\selectfont{$\bullet$}};
\draw [color=black] (-0.5,9) node[] {\fontsize{8 pt}{0pt}\selectfont{$\bullet$}};
\draw [color=black] (0,9) node[] {\fontsize{8 pt}{0pt}\selectfont{$\bullet$}};
\draw [color=black] (0.5,9) node[] {\fontsize{8 pt}{0pt}\selectfont{$\bullet$}};
\draw [color=black] (1,9) node[] {\fontsize{8 pt}{0pt}\selectfont{$\bullet$}};
\draw [color=black] (1.5,9) node[] {\fontsize{8 pt}{0pt}\selectfont{$\bullet$}};
\draw [color=black] (2,9) node[] {\fontsize{8 pt}{0pt}\selectfont{$\bullet$}};
\draw [color=black] (2.5,9) node[] {\fontsize{8 pt}{0pt}\selectfont{$\bullet$}};
\draw [color=black] (3,9) node[] {\fontsize{8 pt}{0pt}\selectfont{$\bullet$}};
\draw [color=black] (3.5,9) node[] {\fontsize{8 pt}{0pt}\selectfont{$\bullet$}};
\draw [color=black] (4,9) node[] {\fontsize{8 pt}{0pt}\selectfont{$\bullet$}};
\draw [color=black] (4.5,9) node[] {\fontsize{8 pt}{0pt}\selectfont{$\bullet$}};
\draw [color=black] (5,9) node[] {\fontsize{8 pt}{0pt}\selectfont{$\bullet$}};
\draw [color=black] (5.5,9) node[] {\fontsize{8 pt}{0pt}\selectfont{$\bullet$}};
\draw [color=black] (6,9) node[] {\fontsize{8 pt}{0pt}\selectfont{$\bullet$}};
\end{tikzpicture}  
  \caption{$\eta$.}
     \end{subfigure}
     \hspace{3pt}
     \begin{subfigure}[b]{0.45\textwidth}
         \centering
         \begin{tikzpicture}[line cap=round,line join=round,>=triangle 45,x=0.6cm,y=0.7cm]
\clip(-2,8.5) rectangle (6.5,11.5);
\draw [line width=0.5pt] (2.25,11)-- (0.25,10.5);
\draw [line width=0.5pt] (2.25,11)-- (4.25,10.5);
\draw [line width=0.5pt] (0.25,10.5)-- (-0.75,10);
\draw [line width=0.5pt] (0.25,10.5)-- (1.25,10);
\draw [line width=0.5pt] (4.25,10.5)-- (3.25,10);
\draw [line width=0.5pt] (4.25,10.5)-- (5.25,10);
\draw [line width=0.5pt] (-0.75,10)-- (-1.25,9.5);
\draw [line width=0.5pt] (-0.75,10)-- (-0.25,9.5);
\draw [line width=0.5pt] (1.25,10)-- (0.75,9.5);
\draw [line width=0.5pt] (1.25,10)-- (1.75,9.5);
\draw [line width=0.5pt] (3.25,10)-- (2.75,9.5);
\draw [line width=0.5pt] (3.25,10)-- (3.75,9.5);
\draw [line width=0.5pt] (5.25,10)-- (4.75,9.5);
\draw [line width=0.5pt] (5.25,10)-- (5.75,9.5);
\draw [line width=0.5pt] (-1.25,9.5)-- (-1.5,9);
\draw [line width=0.5pt] (-1.25,9.5)-- (-1,9);
\draw [line width=0.5pt] (-0.25,9.5)-- (-0.5,9);
\draw [line width=0.5pt] (-0.25,9.5)-- (0,9);
\draw [line width=0.5pt] (0.75,9.5)-- (0.5,9);
\draw [line width=0.5pt] (0.75,9.5)-- (1,9);
\draw [line width=0.5pt] (1.75,9.5)-- (1.5,9);
\draw [line width=0.5pt] (1.75,9.5)-- (2,9);
\draw [line width=0.5pt] (2.75,9.5)-- (2.5,9);
\draw [line width=0.5pt] (2.75,9.5)-- (3,9);
\draw [line width=0.5pt] (3.75,9.5)-- (3.5,9);
\draw [line width=0.5pt] (3.75,9.5)-- (4,9);
\draw [line width=0.5pt] (4.75,9.5)-- (4.5,9);
\draw [line width=0.5pt] (4.75,9.5)-- (5,9);
\draw [line width=0.5pt] (5.75,9.5)-- (5.5,9);
\draw [line width=0.5pt] (5.75,9.5)-- (6,9);
\draw [color=qqqqff] (2.25,11) node[] {\fontsize{8 pt}{0pt}\selectfont{$\bullet$}};
\draw [color=qqqqff] (0.25,10.5) node[] {\fontsize{8 pt}{0pt}\selectfont{$\bullet$}};
\draw [color=qqqqff] (4.25,10.5) node[] {\fontsize{8 pt}{0pt}\selectfont{$\bullet$}};
\draw [color=qqqqff] (-0.75,10) node[] {\fontsize{8 pt}{0pt}\selectfont{$\bullet$}};
\draw [color=qqqqff] (1.25,10) node[] {\fontsize{8 pt}{0pt}\selectfont{$\bullet$}};
\draw [color=qqqqff] (3.25,10) node[] {\fontsize{8 pt}{0pt}\selectfont{$\bullet$}};
\draw [color=qqqqff] (5.25,10) node[] {\fontsize{8 pt}{0pt}\selectfont{$\bullet$}};
\draw [color=qqqqff] (-1.25,9.5) node[] {\fontsize{8 pt}{0pt}\selectfont{$\bullet$}};
\draw [color=qqqqff] (-0.25,9.5) node[] {\fontsize{8 pt}{0pt}\selectfont{$\bullet$}};
\draw [color=qqqqff] (0.75,9.5) node[] {\fontsize{8 pt}{0pt}\selectfont{$\bullet$}};
\draw [color=qqqqff] (1.75,9.5) node[] {\fontsize{8 pt}{0pt}\selectfont{$\bullet$}};
\draw [color=qqqqff] (2.75,9.5) node[] {\fontsize{8 pt}{0pt}\selectfont{$\bullet$}};
\draw [color=qqqqff] (3.75,9.5) node[] {\fontsize{8 pt}{0pt}\selectfont{$\bullet$}};
\draw [color=qqqqff] (4.75,9.5) node[] {\fontsize{8 pt}{0pt}\selectfont{$\bullet$}};
\draw [color=qqqqff] (5.75,9.5) node[] {\fontsize{8 pt}{0pt}\selectfont{$\bullet$}};
\draw [color=qqqqff] (-1.5,9) node[] {\fontsize{8 pt}{0pt}\selectfont{$\bullet$}};
\draw [color=qqqqff] (-1,9) node[] {\fontsize{8 pt}{0pt}\selectfont{$\bullet$}};
\draw [color=qqqqff] (-0.5,9) node[] {\fontsize{8 pt}{0pt}\selectfont{$\bullet$}};
\draw [color=qqqqff] (0,9) node[] {\fontsize{8 pt}{0pt}\selectfont{$\bullet$}};
\draw [color=qqqqff] (0.5,9) node[] {\fontsize{8 pt}{0pt}\selectfont{$\bullet$}};
\draw [color=qqqqff] (1,9) node[] {\fontsize{8 pt}{0pt}\selectfont{$\bullet$}};
\draw [color=qqqqff] (1.5,9) node[] {\fontsize{8 pt}{0pt}\selectfont{$\bullet$}};
\draw [color=qqqqff] (2,9) node[] {\fontsize{8 pt}{0pt}\selectfont{$\bullet$}};
\draw [color=qqqqff] (2.5,9) node[] {\fontsize{8 pt}{0pt}\selectfont{$\bullet$}};
\draw [color=qqqqff] (3,9) node[] {\fontsize{8 pt}{0pt}\selectfont{$\bullet$}};
\draw [color=qqqqff] (3.5,9) node[] {\fontsize{8 pt}{0pt}\selectfont{$\bullet$}};
\draw [color=qqqqff] (4,9) node[] {\fontsize{8 pt}{0pt}\selectfont{$\bullet$}};
\draw [color=qqqqff] (4.5,9) node[] {\fontsize{8 pt}{0pt}\selectfont{$\bullet$}};
\draw [color=qqqqff] (5,9) node[] {\fontsize{8 pt}{0pt}\selectfont{$\bullet$}};
\draw [color=qqqqff] (5.5,9) node[] {\fontsize{8 pt}{0pt}\selectfont{$\bullet$}};
\draw [color=qqqqff] (6,9) node[] {\fontsize{8 pt}{0pt}\selectfont{$\bullet$}};
\end{tikzpicture}
  \caption{Every vertices in $T_2$ have the same follower set $T_2$.}
     \end{subfigure}
      \caption{$T_2$.}
  \label{Fig 1}
\end{figure}

    \item Let $G:=\left[\begin{matrix}
        1&1\\
        1&0
    \end{matrix}\right]$. Since $F_{T_G}(g_1)=T_G$, we have $\mathcal{I}=\{\eta_1, \eta_2\}$, where $\eta_1=T_G$ and $\eta_2=F_{T_G}(g_2)$. Then,
\begin{equation*}
        T^{(\eta_1)}=\{g\in T_G : F_{T_G}(g)=\eta_1\}\mbox{ and }
        T^{(\eta_2)}=\{g\in T_G : F_{T_G}(g)=\eta_2\}.
\end{equation*}

    \begin{figure}[H]
     \centering
     \begin{subfigure}[b]{0.45\textwidth}
         \centering
               \centering
     \begin{subfigure}[b]{0.45\textwidth}
         \centering
         \begin{tikzpicture}[line cap=round,line join=round,x=0.5cm,y=0.7cm]
       \clip(-1.75,5.5) rectangle (4.75,8.5);
\draw [line width=0.5pt] (2.25,8)-- (0.25,7.5);
\draw [line width=0.5pt] (2.25,8)-- (4.25,7.5);
\draw [line width=0.5pt] (0.25,7.5)-- (-0.75,7);
\draw [line width=0.5pt] (0.25,7.5)-- (1.25,7);
\draw [line width=0.5pt] (4.25,7.5)-- (3.25,7);
\draw [line width=0.5pt] (-0.75,7)-- (-1.25,6.5);
\draw [line width=0.5pt] (-0.75,7)-- (-0.25,6.5);
\draw [line width=0.5pt] (1.25,7)-- (0.75,6.5);
\draw [line width=0.5pt] (3.25,7)-- (2.75,6.5);
\draw [line width=0.5pt] (3.25,7)-- (3.75,6.5);
\draw [line width=0.5pt] (-1.25,6.5)-- (-1.5,6);
\draw [line width=0.5pt] (-1.25,6.5)-- (-1,6);
\draw [line width=0.5pt] (-0.25,6.5)-- (-0.5,6);
\draw [line width=0.5pt] (0.75,6.5)-- (0.5,6);
\draw [line width=0.5pt] (0.75,6.5)-- (1,6);
\draw [line width=0.5pt] (2.75,6.5)-- (2.5,6);
\draw [line width=0.5pt] (2.75,6.5)-- (3,6);
\draw [line width=0.5pt] (3.75,6.5)-- (3.5,6);
\draw [color=black] (2.25,8) node[] {\fontsize{8 pt}{0pt}\selectfont{$\bullet$}};
\draw [color=black] (0.25,7.5) node[] {\fontsize{8 pt}{0pt}\selectfont{$\bullet$}};
\draw [color=black] (4.25,7.5) node[] {\fontsize{8 pt}{0pt}\selectfont{$\bullet$}};
\draw [color=black] (-0.75,7) node[] {\fontsize{8 pt}{0pt}\selectfont{$\bullet$}};
\draw [color=black] (1.25,7) node[] {\fontsize{8 pt}{0pt}\selectfont{$\bullet$}};
\draw [color=black] (3.25,7) node[] {\fontsize{8 pt}{0pt}\selectfont{$\bullet$}};
\draw [color=black] (-1.25,6.5) node[] {\fontsize{8 pt}{0pt}\selectfont{$\bullet$}};
\draw [color=black] (-0.25,6.5) node[] {\fontsize{8 pt}{0pt}\selectfont{$\bullet$}};
\draw [color=black] (0.75,6.5) node[] {\fontsize{8 pt}{0pt}\selectfont{$\bullet$}};
\draw [color=black] (2.75,6.5) node[] {\fontsize{8 pt}{0pt}\selectfont{$\bullet$}};
\draw [color=black] (3.75,6.5) node[] {\fontsize{8 pt}{0pt}\selectfont{$\bullet$}};
\draw [color=black] (-1.5,6) node[] {\fontsize{8 pt}{0pt}\selectfont{$\bullet$}};
\draw [color=black] (-1,6) node[] {\fontsize{8 pt}{0pt}\selectfont{$\bullet$}};
\draw [color=black] (-0.5,6) node[] {\fontsize{8 pt}{0pt}\selectfont{$\bullet$}};
\draw [color=black] (0.5,6) node[] {\fontsize{8 pt}{0pt}\selectfont{$\bullet$}};
\draw [color=black] (1,6) node[] {\fontsize{8 pt}{0pt}\selectfont{$\bullet$}};
\draw [color=black] (2.5,6) node[] {\fontsize{8 pt}{0pt}\selectfont{$\bullet$}};
\draw [color=black] (3,6) node[] {\fontsize{8 pt}{0pt}\selectfont{$\bullet$}};
\draw [color=black] (3.5,6) node[] {\fontsize{8 pt}{0pt}\selectfont{$\bullet$}};
\end{tikzpicture}
     \end{subfigure}
     \hfill
     \begin{subfigure}[b]{0.45\textwidth}
         \centering
         \begin{tikzpicture}[line cap=round,line join=round,x=0.5cm,y=0.7cm]
       \clip(-2,5.5) rectangle (2.5,8.5);
\draw [line width=0.5pt] (2.25,8)-- (0.25,7.5);
\draw [line width=0.5pt] (0.25,7.5)-- (-0.75,7);
\draw [line width=0.5pt] (0.25,7.5)-- (1.25,7);
\draw [line width=0.5pt] (-0.75,7)-- (-1.25,6.5);
\draw [line width=0.5pt] (-0.75,7)-- (-0.25,6.5);
\draw [line width=0.5pt] (1.25,7)-- (0.75,6.5);
\draw [line width=0.5pt] (-1.25,6.5)-- (-1.5,6);
\draw [line width=0.5pt] (-1.25,6.5)-- (-1,6);
\draw [line width=0.5pt] (-0.25,6.5)-- (-0.5,6);
\draw [line width=0.5pt] (0.75,6.5)-- (0.5,6);
\draw [line width=0.5pt] (0.75,6.5)-- (1,6);
\draw [color=black] (2.25,8) node[] {\fontsize{8 pt}{0pt}\selectfont{$\bullet$}};
\draw [color=black] (0.25,7.5) node[] {\fontsize{8 pt}{0pt}\selectfont{$\bullet$}};
\draw [color=black] (-0.75,7) node[] {\fontsize{8 pt}{0pt}\selectfont{$\bullet$}};
\draw [color=black] (1.25,7) node[] {\fontsize{8 pt}{0pt}\selectfont{$\bullet$}};
\draw [color=black] (-1.25,6.5) node[] {\fontsize{8 pt}{0pt}\selectfont{$\bullet$}};
\draw [color=black] (-0.25,6.5) node[] {\fontsize{8 pt}{0pt}\selectfont{$\bullet$}};
\draw [color=black] (0.75,6.5) node[] {\fontsize{8 pt}{0pt}\selectfont{$\bullet$}};
\draw [color=black] (-1.5,6) node[] {\fontsize{8 pt}{0pt}\selectfont{$\bullet$}};
\draw [color=black] (-1,6) node[] {\fontsize{8 pt}{0pt}\selectfont{$\bullet$}};
\draw [color=black] (-0.5,6) node[] {\fontsize{8 pt}{0pt}\selectfont{$\bullet$}};
\draw [color=black] (0.5,6) node[] {\fontsize{8 pt}{0pt}\selectfont{$\bullet$}};
\draw [color=black] (1,6) node[] {\fontsize{8 pt}{0pt}\selectfont{$\bullet$}};
\end{tikzpicture}
     \end{subfigure}
  \caption{$\eta_1$ and $\eta_2$.}
     \end{subfigure}
     \begin{subfigure}[b]{0.45\textwidth}
         \centering
         \begin{tikzpicture}[line cap=round,line join=round,>=triangle 45,x=0.6cm,y=0.7cm]
       \clip(-1.75,5.5) rectangle (4.75,8.5);
\draw [line width=0.5pt] (2.25,8)-- (0.25,7.5);
\draw [line width=0.5pt] (2.25,8)-- (4.25,7.5);
\draw [line width=0.5pt] (0.25,7.5)-- (-0.75,7);
\draw [line width=0.5pt] (0.25,7.5)-- (1.25,7);
\draw [line width=0.5pt] (4.25,7.5)-- (3.25,7);
\draw [line width=0.5pt] (-0.75,7)-- (-1.25,6.5);
\draw [line width=0.5pt] (-0.75,7)-- (-0.25,6.5);
\draw [line width=0.5pt] (1.25,7)-- (0.75,6.5);
\draw [line width=0.5pt] (3.25,7)-- (2.75,6.5);
\draw [line width=0.5pt] (3.25,7)-- (3.75,6.5);
\draw [line width=0.5pt] (-1.25,6.5)-- (-1.5,6);
\draw [line width=0.5pt] (-1.25,6.5)-- (-1,6);
\draw [line width=0.5pt] (-0.25,6.5)-- (-0.5,6);
\draw [line width=0.5pt] (0.75,6.5)-- (0.5,6);
\draw [line width=0.5pt] (0.75,6.5)-- (1,6);
\draw [line width=0.5pt] (2.75,6.5)-- (2.5,6);
\draw [line width=0.5pt] (2.75,6.5)-- (3,6);
\draw [line width=0.5pt] (3.75,6.5)-- (3.5,6);
\draw [color=qqqqff] (2.25,8) node[] {\fontsize{8 pt}{0pt}\selectfont{$\bullet$}};
\draw [color=qqqqff] (0.25,7.5) node[] {\fontsize{8 pt}{0pt}\selectfont{$\bullet$}};
\draw [color=ffqqqq] (4.25,7.5) node[] {\fontsize{4 pt}{0pt}\selectfont{$\blacksquare$}};
\draw [color=qqqqff] (-0.75,7) node[] {\fontsize{8 pt}{0pt}\selectfont{$\bullet$}};
\draw [color=ffqqqq] (1.25,7) node[] {\fontsize{4 pt}{0pt}\selectfont{$\blacksquare$}};
\draw [color=qqqqff] (3.25,7) node[] {\fontsize{8 pt}{0pt}\selectfont{$\bullet$}};
\draw [color=qqqqff] (-1.25,6.5) node[] {\fontsize{8 pt}{0pt}\selectfont{$\bullet$}};
\draw [color=ffqqqq] (-0.25,6.5) node[] {\fontsize{4 pt}{0pt}\selectfont{$\blacksquare$}};
\draw [color=qqqqff] (0.75,6.5) node[] {\fontsize{8 pt}{0pt}\selectfont{$\bullet$}};
\draw [color=qqqqff] (2.75,6.5) node[] {\fontsize{8 pt}{0pt}\selectfont{$\bullet$}};
\draw [color=ffqqqq] (3.75,6.5) node[] {\fontsize{4 pt}{0pt}\selectfont{$\blacksquare$}};
\draw [color=qqqqff] (-1.5,6) node[] {\fontsize{8 pt}{0pt}\selectfont{$\bullet$}};
\draw [color=qqqqff] (-1,6) node[] {\fontsize{8 pt}{0pt}\selectfont{$\bullet$}};
\draw [color=qqqqff] (-0.5,6) node[] {\fontsize{8 pt}{0pt}\selectfont{$\bullet$}};
\draw [color=qqqqff] (0.5,6) node[] {\fontsize{8 pt}{0pt}\selectfont{$\bullet$}};
\draw [color=qqqqff] (1,6) node[] {\fontsize{8 pt}{0pt}\selectfont{$\bullet$}};
\draw [color=qqqqff] (2.5,6) node[] {\fontsize{8 pt}{0pt}\selectfont{$\bullet$}};
\draw [color=qqqqff] (3,6) node[] {\fontsize{8 pt}{0pt}\selectfont{$\bullet$}};
\draw [color=qqqqff] (3.5,6) node[] {\fontsize{8 pt}{0pt}\selectfont{$\bullet$}};
\end{tikzpicture}
  \caption{The vertices denoted as {\color{blue}\scalebox{1.4}{$\bullet$}} (resp. {\color{red}\scalebox{0.9}{$\blacksquare$}}) are belongs to $T^{(\eta_1)}$  (resp. $T^{(\eta_2)}$)}
     \end{subfigure}
        \caption{$T_G$}
        \label{Fig 2}
\end{figure}

  \item Let $U:=\left[\begin{matrix}
        1&1\\
        0&1
    \end{matrix}\right]$. Since $F_{T_U}(g_1)=T_U$, we have $\mathcal{I}=\{\eta_1, \eta_2\}$, where $\eta_1=T_U$ and $\eta_2=F_{T_U}(g_2)$. Then, \[T^{(\eta_1)}=\{g\in T_U : F_{T_U}(g)=\eta_1\}\mbox{ and }    T^{(\eta_2)}=\{g\in T_U : F_{T_U}(g)=\eta_2\}.\]

 \begin{figure}[H]
     \centering
     \begin{subfigure}[b]{0.45\textwidth}
         \centering
               \centering
     \begin{subfigure}[b]{0.45\textwidth}
         \centering
        \begin{tikzpicture}[line cap=round,line join=round,>=triangle 45,x=0.4cm,y=0.7cm]
            \clip(-1.75,2.5) rectangle (6.25,5.5);
\draw [line width=0.5pt] (2.25,5)-- (0.25,4.5);
\draw [line width=0.5pt] (2.25,5)-- (4.25,4.5);
\draw [line width=0.5pt] (0.25,4.5)-- (-0.75,4);
\draw [line width=0.5pt] (0.25,4.5)-- (1.25,4);
\draw [line width=0.5pt] (4.25,4.5)-- (5.25,4);
\draw [line width=0.5pt] (-0.75,4)-- (-1.25,3.5);
\draw [line width=0.5pt] (-0.75,4)-- (-0.25,3.5);
\draw [line width=0.5pt] (1.25,4)-- (1.75,3.5);
\draw [line width=0.5pt] (5.25,4)-- (5.75,3.5);
\draw [line width=0.5pt] (-1.25,3.5)-- (-1.5,3);
\draw [line width=0.5pt] (-1.25,3.5)-- (-1,3);
\draw [line width=0.5pt] (-0.25,3.5)-- (0,3);
\draw [line width=0.5pt] (1.75,3.5)-- (2,3);
\draw [line width=0.5pt] (5.75,3.5)-- (6,3);
\draw [color=black] (2.25,5) node[] {\fontsize{8 pt}{0pt}\selectfont{$\bullet$}};
\draw [color=black] (0.25,4.5) node[] {\fontsize{8 pt}{0pt}\selectfont{$\bullet$}};
\draw [color=black] (4.25,4.5) node[] {\fontsize{8 pt}{0pt}\selectfont{$\bullet$}};
\draw [color=black] (-0.75,4) node[] {\fontsize{8 pt}{0pt}\selectfont{$\bullet$}};
\draw [color=black] (1.25,4) node[] {\fontsize{8 pt}{0pt}\selectfont{$\bullet$}};
\draw [color=black] (5.25,4) node[] {\fontsize{8 pt}{0pt}\selectfont{$\bullet$}};
\draw [color=black] (-1.25,3.5) node[] {\fontsize{8 pt}{0pt}\selectfont{$\bullet$}};
\draw [color=black] (-0.25,3.5) node[] {\fontsize{8 pt}{0pt}\selectfont{$\bullet$}};
\draw [color=black] (1.75,3.5) node[] {\fontsize{8 pt}{0pt}\selectfont{$\bullet$}};
\draw [color=black] (5.75,3.5) node[] {\fontsize{8 pt}{0pt}\selectfont{$\bullet$}};
\draw [color=black] (-1.5,3) node[] {\fontsize{8 pt}{0pt}\selectfont{$\bullet$}};
\draw [color=black] (-1,3) node[] {\fontsize{8 pt}{0pt}\selectfont{$\bullet$}};
\draw [color=black] (0,3) node[] {\fontsize{8 pt}{0pt}\selectfont{$\bullet$}};
\draw [color=black] (2,3) node[] {\fontsize{8 pt}{0pt}\selectfont{$\bullet$}};
\draw [color=black] (6,3) node[] {\fontsize{8 pt}{0pt}\selectfont{$\bullet$}};
         \end{tikzpicture}
     \end{subfigure}
     \hfill
     \begin{subfigure}[b]{0.45\textwidth}
         \centering
         \begin{tikzpicture}[line cap=round,line join=round,>=triangle 45,x=0.4cm,y=0.7cm]
            \clip(2,2.5) rectangle (6.25,5.5);
\draw [line width=0.5pt] (2.25,5)-- (4.25,4.5);
\draw [line width=0.5pt] (4.25,4.5)-- (5.25,4);
\draw [line width=0.5pt] (5.25,4)-- (5.75,3.5);
\draw [line width=0.5pt] (5.75,3.5)-- (6,3);
\draw [color=black] (2.25,5) node[] {\fontsize{8 pt}{0pt}\selectfont{$\bullet$}};
\draw [color=black] (4.25,4.5) node[] {\fontsize{8 pt}{0pt}\selectfont{$\bullet$}};
\draw [color=black] (5.25,4) node[] {\fontsize{8 pt}{0pt}\selectfont{$\bullet$}};
\draw [color=black] (5.75,3.5) node[] {\fontsize{8 pt}{0pt}\selectfont{$\bullet$}};
\draw [color=black] (6,3) node[] {\fontsize{8 pt}{0pt}\selectfont{$\bullet$}};
         \end{tikzpicture}
     \end{subfigure}
  \caption{$\eta_1$ and $\eta_2$.}
     \end{subfigure}
     \begin{subfigure}[b]{0.45\textwidth}
         \centering
         \begin{tikzpicture}[line cap=round,line join=round,>=triangle 45,x=0.6cm,y=0.7cm]
     \clip(-1.75,2.5) rectangle (6.25,5.5);
\draw [line width=0.5pt] (2.25,5)-- (0.25,4.5);
\draw [line width=0.5pt] (2.25,5)-- (4.25,4.5);
\draw [line width=0.5pt] (0.25,4.5)-- (-0.75,4);
\draw [line width=0.5pt] (0.25,4.5)-- (1.25,4);
\draw [line width=0.5pt] (4.25,4.5)-- (5.25,4);
\draw [line width=0.5pt] (-0.75,4)-- (-1.25,3.5);
\draw [line width=0.5pt] (-0.75,4)-- (-0.25,3.5);
\draw [line width=0.5pt] (1.25,4)-- (1.75,3.5);
\draw [line width=0.5pt] (5.25,4)-- (5.75,3.5);
\draw [line width=0.5pt] (-1.25,3.5)-- (-1.5,3);
\draw [line width=0.5pt] (-1.25,3.5)-- (-1,3);
\draw [line width=0.5pt] (-0.25,3.5)-- (0,3);
\draw [line width=0.5pt] (1.75,3.5)-- (2,3);
\draw [line width=0.5pt] (5.75,3.5)-- (6,3);
\draw [color=qqqqff] (2.25,5) node[] {\fontsize{8 pt}{0pt}\selectfont{$\bullet$}};
\draw [color=qqqqff] (0.25,4.5) node[] {\fontsize{8 pt}{0pt}\selectfont{$\bullet$}};
\draw [color=ffqqqq] (4.25,4.5) node[] {\fontsize{4 pt}{0pt}\selectfont{$\blacksquare$}};
\draw [color=qqqqff] (-0.75,4) node[] {\fontsize{8 pt}{0pt}\selectfont{$\bullet$}};
\draw [color=ffqqqq] (1.25,4) node[] {\fontsize{4 pt}{0pt}\selectfont{$\blacksquare$}};
\draw [color=ffqqqq] (5.25,4) node[] {\fontsize{4 pt}{0pt}\selectfont{$\blacksquare$}};
\draw [color=qqqqff] (-1.25,3.5) node[] {\fontsize{8 pt}{0pt}\selectfont{$\bullet$}};
\draw [color=ffqqqq] (-0.25,3.5) node[] {\fontsize{4 pt}{0pt}\selectfont{$\blacksquare$}};
\draw [color=ffqqqq] (1.75,3.5) node[] {\fontsize{4 pt}{0pt}\selectfont{$\blacksquare$}};
\draw [color=ffqqqq] (5.75,3.5) node[] {\fontsize{4 pt}{0pt}\selectfont{$\blacksquare$}};
\draw [color=qqqqff] (-1.5,3) node[] {\fontsize{8 pt}{0pt}\selectfont{$\bullet$}};
\draw [color=ffqqqq] (-1,3) node[] {\fontsize{4 pt}{0pt}\selectfont{$\blacksquare$}};
\draw [color=ffqqqq] (0,3) node[] {\fontsize{4 pt}{0pt}\selectfont{$\blacksquare$}};
\draw [color=ffqqqq] (2,3) node[] {\fontsize{4 pt}{0pt}\selectfont{$\blacksquare$}};
\draw [color=ffqqqq] (6,3) node[] {\fontsize{4 pt}{0pt}\selectfont{$\blacksquare$}};
\end{tikzpicture}
  \caption{The vertices denoted as {\color{blue}\scalebox{1.4}{$\bullet$}} (resp. {\color{red}\scalebox{0.9}{$\blacksquare$}}) are belongs to $T^{(\eta_1)}$  (resp. $T^{(\eta_2)}$)}
     \end{subfigure}
        \caption{$T_U$}
        \label{Fig 3}
\end{figure}
\end{enumerate}
\end{example}

Let $\mathcal{A}$ be a set of finite symbols and let $\mathcal{F}\subseteq \bigcup\limits_{m=0}^{\infty }\bigcup\limits_{\eta
\in \mathcal{I}}\mathcal{A}^{\Delta _{m}^{(\eta )}}$. The \emph{tree-shift }$\mathcal{T}_{\mathcal{F}}$ (or $\mathcal{T}$) of $T$ associated with the \emph{forbidden set} $\mathcal{F}$ is defined by 
\begin{equation}
\left\{ t=(t_g)_{g\in T}\in \mathcal{A}^{T}:\sigma
_{g}(t)|_{\Delta _{m}^{(\eta )}}\notin \mathcal{F}\text{, }\forall \eta \in 
\mathcal{I}\text{, }\forall g\in T^{(\eta )}\text{, }\forall m\in \mathbb{N}\cup \{0\}\right\} \text{,}  \label{10}
\end{equation}
where $\left( \sigma _{g}(t)\right) _{g^{\prime }}=t_{gg^{\prime }}$ ($\forall
g^{\prime }\in \eta ,\forall g\in T^{(\eta )},\forall \eta\in\mathcal{I}$), and $t|_{F}=(t_{g})_{g\in F}$ is the canonical projection of $t\in \mathcal{A}^{T}$ into $\mathcal{A}^{F}$ ($\forall F\subseteq T$). When the set $\mathcal{F}$ is finite, we call $\mathcal{T}$ a \emph{tree-shift of finite type }(TSFT for short). In view of (\ref{10}), we define the \emph{projected tree-shift} of $\mathcal{T}$ by
\[
\mathcal{T}^{\mathcal{I}}=\bigsqcup\limits_{\eta \in \mathcal{I}}\mathcal{T}^{(\eta )}\text{,}
\]
where $\sqcup$ denotes the disjoint union, and
\[
\mathcal{T}^{(\eta )}=\bigcup\limits_{g\in T^{(\eta )}}\{\sigma _{g}(t):t\in\mathcal{T}\}.
\]

Note that if $T=T_{d}$, then $\left\vert \mathcal{I}\right\vert =1$, $\Delta_{m}^{(\eta)}=\{g\in T_d: 0\leq |g-\epsilon|\leq m \}~(\forall \eta\in\mathcal{I})$, and $\mathcal{T}^{\mathcal{I}}=\mathcal{T}$. Let $\Delta _{m}:=\Delta_m^{(T_d)}$ and let $\mathcal{F}\subseteq \bigcup\limits_{m=0}^{\infty }\mathcal{A}^{\Delta _{m}}$, and the tree-shift $\mathcal{T}$ in (\ref{10}) can be written as 
\begin{equation}
\left\{ t\in \mathcal{A}^{T_d}:\sigma_g(t)|_{\Delta_{m}}\notin \mathcal{F}\text{, }\forall g\in T_d\text{, }\forall m\in \mathbb{N}\cup \{0\}\right\} \text{.}  \label{9}
\end{equation}
The definition of (\ref{9}) is the formal definition of the tree-shift on $T_{d}$ (cf.\cite{aubrun2012tree, ban2017mixing, ban2017tree,PS-2017complexity,petersen2020entropy}). The
research topics on the dynamics of $\mathcal{T}$ have been receiving a lot of attention lately because they exhibit phenomena that are very different from shift spaces defined on $\mathbb{Z}^{d}$. Let $\mathcal{T}\subseteq \mathcal{A}^{T}$ be a tree-shift on the tree $T$, and the \emph{distance} $d(s,t)$ of $s$, $t\in \mathcal{T}^{(\eta )}$ for $\eta \in \mathcal{I}$ is defined by 
\begin{equation}
    d(s,t)=\inf \left\{2^{-m}:s|_{\Delta _{m}^{(\eta )}}=t|_{\Delta _{m}^{(\eta )}}\right\}\text{.} \label{1}
\end{equation}

\subsection{The $\omega$-limit sets}
In this subsection, we study the $\omega$-limit set of a shift space defined on $G$. Let $G$ be a group or monoid, $F$ be a family of subsets of $G$, $\sigma_g(x):G\times X\to X$ ($g\in G$ and $x\in X$) be a $G$ action on a compact metric space $X$, and $x\in X$. The $\omega$\emph{-limit set of $x$ for the family }$F$ \cite{souza2012limit} is defined by 
\begin{equation}
\omega (x)=\bigcap\limits_{A\in F}\overline{\{\sigma_{g}(x):g\in A\}}\text{.}  \label{2}
\end{equation}

Binder and Meddaugh \cite{binder2019limitPH,binder2019limit} introduced various types of $\omega $-limit sets of a tree-shift $\mathcal{T}$ on $T_{d}$ in Definition \ref{def1.2}, based on (\ref{2}) with $G=T_d$. We say $p$ is a \emph{ray} in $T$ if $p$ is an infinite non-self-intersection path emanating from $\epsilon$, that is,
\[
p=( p_{0}, p_{0}p_{1}, p_{0}p_{1}p_{2}, ...) = ( \epsilon, g_{i_{1}}, g_{i_{1}}g_{i_{2}}, ...).
\]
We denote the collection of all rays in $T$ by $\partial T$.

\begin{definition}\label{def1.2}
Let $\mathcal{T}$ be a tree-shift on $T_{d}$, $t\in \mathcal{T}$, and $p\in \partial T_d$. 
\begin{enumerate}
\item The $\omega$\emph{-limit set of }$t$ is defined as 
\begin{equation}\label{3}
\omega^{d}(t)=\bigcap\limits_{n\in \mathbb{N}}\overline{\{\sigma_{g}(t): g\in T_{d}\text{, }\left\vert g-\epsilon \right\vert >n\}}\text{.}
\end{equation}

\item The $\omega$\emph{-limit set of }$t$\emph{\ along }$p$ is defined as 
\begin{equation}
\omega _{p}^{d}(t)=\bigcap\limits_{n\in \mathbb{N}}\overline{\{\sigma_{g_{i_{1}}g_{i_{2}}\cdots g_{i_{k}}}(t):k\geq n\}}\text{.}  \label{4}
\end{equation}

\item The $\omega $\emph{-limit set of }$t$\emph{\ along followers of }$p$ is defined as
\begin{equation}
\omega _{F_{p}}^{d}(t)=\bigcap\limits_{n\in \mathbb{N}}\overline{\{\sigma
_{g}(t):g=g_{i_{1}}g_{i_{2}}\cdots g_{i_{n}}g^{\prime }\}}\text{.}  \label{7}
\end{equation}

\item The $\omega$\emph{-limit set $\omega _{CPS}^{d}(t)$ of }$t$\emph{\ in the sense of CPS}\footnote{A \emph{prefix set} $P$ is a subset of $T$ such that no element in $P$ is a prefix of another. A finite prefix $P$ is a \emph{complete prefix set} (CPS) if $\forall g\in T$ with $\left\vert g-\epsilon\right\vert \geq \max \{\left\vert g-\epsilon\right\vert :g\in P\}$ has a prefix in $P$.} is defined as 
\begin{equation}
\{s\in \mathcal{T}:\forall n\text{ }\exists~
C_{n}\subseteq \mathcal{C}\text{ with }\underline{m}_{C_{n}}\geq n\ni
\forall g\in C_{n}\text{ }d(\sigma _{g}(t),s)<n^{-1}\}\text{,}  \label{8}
\end{equation}
where $\mathcal{C}$ is the collection of all CPS of $T$, and $\underline{m}_{P}:=\min \{\left\vert g-\epsilon\right\vert :g\in P\}$.
\item A set $S\subseteq \mathcal{A}^{T_{d}}$ is called \emph{invariant }if $\forall s\in S$, $\sigma _{g_{i}}(s)\in S$ for all $1\leq i\leq d$.
\end{enumerate}
\end{definition}

For $\mathcal{T}$ is a tree-shift on $T_{M}$ and $t\in\mathcal{T}$. The definition of an $\omega$-limit set $\omega^{M}(t)$ (resp. $\omega_{p}^{M}(t)$ and $\omega_{F_{p}}^{M}(t)$) is similar to that of (\ref{3}) (resp. (\ref{4}) and (\ref{7})). The only modification is to
replace the role of $T_{d}$ in each definition of $\omega$-limit sets with $T_{M}$. However, the definition of an $\omega$-limit set of $t$ in the sense of CPS needs to be modified as follows.

\begin{definition}
Let $\mathcal{I}$ be the corresponding family of follower sets of a Markov-Cayley tree  $T_{M}$. For any $\xi_{1}, \ldots ,\xi_{\ell}\in \mathcal{I}$ with $1\leq \ell \leq | \mathcal{I}|$, a subset $C$ of $T_M$ is called a $[\xi_{1}: \xi_{\ell}]$\emph{-CPS} if $C$ is a CPS in $T_{M}$ and $\{ F_{T_M}(g):g\in C\}=\{ \xi_1, ..., \xi_\ell\}$. Let $\mathcal{C}^{[\xi_{1}:\xi_{\ell}]}$ be the collection of all $[\xi_{1}:\xi_{\ell}]$-CPS in $T_{M}$. The $\omega $\emph{-limit set of }$t$\emph{ in the sense of CPS} is defined as 
\[
\omega _{CPS}^{M}(t)=\bigcup\limits_{\ell=1}^{\left\vert \mathcal{I}\right\vert}\bigcup\limits_{\substack{\xi_{1},\ldots ,\xi_{\ell}\in \mathcal{I}\\ \forall i\neq j,~\xi_{i}\neq \xi_{j}}}\omega_{CPS}^{M;[\xi_{1}:\xi_{\ell}]}(t)\text{,} 
\]
where $\omega_{CPS}^{M;[\xi _{1}:\xi _{\ell}]}(t)$ is the set of vectors $(t_{1}, \ldots ,t_{\ell})\in \prod_{i=1}^{\ell}\mathcal{A}^{\xi _{i}}$ satisfying that for all $n>0$ $\exists$ $C_{n}\in\mathcal{C}^{[\xi_{1}:\xi_{\ell}]}$ with $\underline{m}_{C_{n}}\geq n$ such that $\forall g\in C_{n}$ if $F_{T_{M}}(g)=\xi_{i}$, then $d(\sigma_{g}(t),t_{i})<n^{-1}$. 
\end{definition}
Note that if $M$ is a $d\times d$ full one matrix, then $\mathcal{I}_{T_M}=\{T_d\}$. Thus, $\omega_{CPS}^M(t)=\omega_{CPS}^d(t)$ for all $t\in \mathcal{T}\subseteq \mathcal{A}^{T_M}$. A set $S\subseteq \bigcup\limits_{\eta \in \mathcal{I}}\mathcal{A}^{\eta }$ is 
\emph{invariant }if for all $s\in S$, $\sigma_{g_{i}}(s)\in S$ ($\forall \sigma_{g_{i}}(s)\neq \emptyset $). In \cite[Theorem 4.20]{binder2019limitPH}, the author establishes the basic properties of those $\omega$-limit sets and proved that 
\begin{equation}
\omega_{CPS}^{d}(t)\subseteq \omega_{p}^{d}(t)\subseteq \omega_{F_{p}}^{d}(t)\subseteq \omega^{d}(t),~\forall p\in \partial T_{d},~\forall t\in \mathcal{T}\text{.}  \label{5}
\end{equation}

First, we prove some basic properties of these $\omega$\emph{-limit sets} and remark that the first inclusion of (\ref{5}) is not generally true only when $T_{M}=T_{d}$ (Theorem \ref{Thm: 1} (4) is the same result as \cite[Theorem 4.20]{binder2019limitPH}). For the relation of $\omega_{CPS}^{M}(t)$ with other $\omega$-limit sets, we have the following theorem. Let $A$ be a collection of vectors in $\mathbb{R}^{i}$ for some $i\in \mathbb{N}$. If all coordinates of any vectors of $A$ are in the set $B$, then we write $A\trianglelefteq B$. Otherwise, $A\ntrianglelefteq B$.

\begin{theorem}\label{Thm: 1}
Let $T_M$ be a Markov-Cayley tree, $\mathcal{T}\subseteq \mathcal{A}^{T_{M}}$ be a tree-shift, $p$ be a ray in $\partial T_{M}$ and $t\in \mathcal{T}$. The following assertions hold true.
\begin{enumerate}
\item $\omega ^{M}(t)$ and $\omega_{F_{p}}^{M}(t)$are invariant.

\item $\omega _{p}^{M}(t)\subseteq \omega _{F_{p}}^{M}(t)\subseteq \omega^{M}(t)\subseteq\mathcal{T}^{\mathcal{I}}$.

\item $\omega_{CPS}^{M}(t)\trianglelefteq \omega^{M}(t)$

\item If $M$ is a full matrix, then
\begin{equation}
\omega_{CPS}^{M}(t)\trianglelefteq\omega_{p}^{M}(t)\subseteq \omega_{F_{p}}^{M}(t)\subseteq \omega^{M}(t)\text{.}  \label{6}
\end{equation}
\end{enumerate}
\end{theorem}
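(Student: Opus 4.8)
The plan is to treat the four parts in order. Parts (1), (3), (4) and the first two inclusions of (2) rely only on soft ingredients: continuity of the partial shift maps $\sigma_{g}$, the additivity $|gg'-\epsilon|=|g-\epsilon|+|g'-\epsilon|$, elementary inclusions between the sets whose closures define the various $\omega$-limit sets, and the combinatorial fact that a ray meets every complete prefix set. The inclusion $\omega^{M}(t)\subseteq\mathcal{T}^{\mathcal{I}}$ in (2) is the only step requiring genuine work, and I expect it to be the main obstacle.

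\emph{Part (1) and the inclusions in (2).} Let $s\in\omega^{M}(t)$. Since the closure defining $\omega^{M}(t)$ is taken in the disjoint union $\bigsqcup_{\eta\in\mathcal{I}}\mathcal{A}^{\eta}$, there are $\eta\in\mathcal{I}$ with $s\in\mathcal{A}^{\eta}$ and a sequence $g_{k}\in T^{(\eta)}$ with $|g_{k}-\epsilon|\to\infty$ and $\sigma_{g_{k}}(t)\to s$. For $g_{i}\in\eta$ the map $\sigma_{g_{i}}\colon\mathcal{A}^{\eta}\to\mathcal{A}^{F_{T_{M}}(g_{i})}$ is Lipschitz, hence $\sigma_{g_{k}g_{i}}(t)=\sigma_{g_{i}}(\sigma_{g_{k}}(t))\to\sigma_{g_{i}}(s)$ while $|g_{k}g_{i}-\epsilon|=|g_{k}-\epsilon|+1\to\infty$, so $\sigma_{g_{i}}(s)\in\omega^{M}(t)$; the same argument gives invariance of $\omega_{F_{p}}^{M}(t)$, since appending a generator to $g=g_{i_{1}}\cdots g_{i_{n}}g'$ leaves its first $n$ letters unchanged. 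For the inclusions in (2) one observes $\{\sigma_{g_{i_{1}}\cdots g_{i_{k}}}(t):k\ge n\}\subseteq\{\sigma_{g}(t):g=g_{i_{1}}\cdots g_{i_{n}}g'\}\subseteq\{\sigma_{g}(t):|g-\epsilon|\ge n\}$ (for the first inclusion put $g'=g_{i_{n+1}}\cdots g_{i_{k}}$), and passes to closures and then to the intersection over $n$.

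\emph{The inclusion $\omega^{M}(t)\subseteq\mathcal{T}^{\mathcal{I}}$.} Keeping $s=\lim_{k}\sigma_{g_{k}}(t)\in\mathcal{A}^{\eta}$ as above, a limiting argument on finite patterns (identical to the one in part (1)) shows that $s$ contains no pattern from $\mathcal{F}$, and each $\sigma_{g_{k}}(t)$ lies in $\mathcal{T}^{(\eta)}$; so it suffices to prove $\mathcal{T}^{(\eta)}$ is closed. This is where non-invariance intervenes: $\mathcal{T}^{(\eta)}=\bigcup_{g\in T^{(\eta)}}\sigma_{g}(\mathcal{T})$ is a priori an infinite union of continuous images of the compact space $\mathcal{T}$. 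I would close it off by a pumping argument: after passing to a one-step presentation, along the path $\epsilon\to g$ in any $t'\in\mathcal{T}$ with $g\in T^{(\eta)}$ the pairs (follower set, symbol) must repeat once $|g-\epsilon|$ exceeds a constant depending only on $|\mathcal{I}|$ and the alphabet, and splicing out the loop between two coinciding vertices yields $t''\in\mathcal{T}$ realizing the same configuration $\sigma_{g}(t')$ from a strictly shorter element of $T^{(\eta)}$ — the new edge created at the splice respects $M$ precisely because the two spliced vertices share a follower set. Consequently $\mathcal{T}^{(\eta)}$ is a finite union of compact sets, hence closed, and $s\in\mathcal{T}^{(\eta)}\subseteq\mathcal{T}^{\mathcal{I}}$. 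Everything else in the proof is routine.

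\emph{Parts (3) and (4).} For (3), fix a coordinate $i$ and a vector $(t_{1},\dots,t_{\ell})\in\omega_{CPS}^{M;[\xi_{1}:\xi_{\ell}]}(t)$; for each $n$ pick $C_{n}\in\mathcal{C}^{[\xi_{1}:\xi_{\ell}]}$ with $\underline{m}_{C_{n}}\ge n$ as in the definition. Because $\{F_{T_{M}}(g):g\in C_{n}\}=\{\xi_{1},\dots,\xi_{\ell}\}$, some $g_{n}\in C_{n}$ has $F_{T_{M}}(g_{n})=\xi_{i}$, and then $|g_{n}-\epsilon|\ge n$ and $d(\sigma_{g_{n}}(t),t_{i})<n^{-1}$; thus $\sigma_{g_{n}}(t)\to t_{i}$ along $g_{n}\in T^{(\xi_{i})}$ with $|g_{n}-\epsilon|\to\infty$, i.e.\ $t_{i}\in\omega^{M}(t)$, which is exactly $\omega_{CPS}^{M}(t)\trianglelefteq\omega^{M}(t)$. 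For (4), when $M$ is full we have $T_{M}=T_{d}$ and $|\mathcal{I}|=1$, so $\ell=1$ and $\trianglelefteq$ reduces to $\subseteq$; the chain $\omega_{p}^{M}(t)\subseteq\omega_{F_{p}}^{M}(t)\subseteq\omega^{M}(t)$ is part (2), so only $\omega_{CPS}^{M}(t)\subseteq\omega_{p}^{M}(t)$ is new. Given $s\in\omega_{CPS}^{M}(t)$ and $n$, choose $C_{n}$ with $\underline{m}_{C_{n}}\ge n$ and $d(\sigma_{g}(t),s)<n^{-1}$ for all $g\in C_{n}$; since $C_{n}$ is a complete prefix set, the initial segment of $p$ of length $\max\{|g-\epsilon|:g\in C_{n}\}$ has a prefix in $C_{n}$, say $g_{i_{1}}\cdots g_{i_{k_{n}}}\in C_{n}$ with $k_{n}\ge\underline{m}_{C_{n}}\ge n$, whence $d(\sigma_{g_{i_{1}}\cdots g_{i_{k_{n}}}}(t),s)<n^{-1}$. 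Letting $n\to\infty$ gives $s\in\omega_{p}^{M}(t)$, completing (4).
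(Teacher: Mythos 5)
Your handling of part (1), of the first two inclusions in part (2), and of parts (3) and (4) is correct and essentially coincides with the paper's argument: the paper substitutes your sequential limits by an explicit $\varepsilon$--$n$ characterization of $\omega^{M}(t)$ (Lemma \ref{lemma1.5}) and then runs the same Lipschitz estimate $d(\sigma_{g_{i}}(x),\sigma_{g_{i}}(y))\leq 2\,d(x,y)$ for invariance, the same ``pick $g\in C_{n}$ with $F_{T_{M}}(g)=\xi_{i}$'' step for (3), and for (4) simply invokes the known chain (\ref{5}) on the $d$-tree, which your direct ray-meets-CPS argument reproves.

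The gap is in your proof of $\omega^{M}(t)\subseteq\mathcal{T}^{\mathcal{I}}$, which you correctly identify as the nontrivial step (the paper offers no argument for it beyond ``directly obtained by the definitions''). Your reduction to the closedness of $\mathcal{T}^{(\eta)}$ is the right reduction, but the pumping argument does not close it. First, ``passing to a one-step presentation'' is only available for tree-shifts of finite type, whereas the theorem is stated for arbitrary tree-shifts with possibly infinite $\mathcal{F}$; and the splice itself replaces the entire subtree hanging at $v_{j}$ by the subtree of $t'$ at $v_{j'}$, a surgery that is guaranteed to stay inside $\mathcal{T}$ only when the defining constraints are local. Second, and decisively, the claim you are trying to prove is false without a finite-type hypothesis, so no repair of the splicing can succeed. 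Take $T_{U}$ as in the paper's Example 1.1(3), so $T_{U}=\{g_{1}^{a}g_{2}^{b}:a,b\geq 0\}$ with $\eta_{1}=T_{U}$ and $\eta_{2}=F_{T_{U}}(g_{2})$ a single ray. Let $\mathcal{A}=\{0,1\}$ and let $\mathcal{F}$ consist of all patterns $P\in\mathcal{A}^{\Delta_{m}^{(\eta_{1})}}$, $m\geq 0$, such that $P(g_{2}^{i})=P(g_{2}^{i+1})=\cdots=P(g_{2}^{2i+1})=1$ for some $i$ with $2i+1\leq m$. Since these patterns are tested exactly at the spine positions $g_{1}^{a}\in T^{(\eta_{1})}$, a configuration $t$ lies in $\mathcal{T}$ if and only if no column $b\mapsto t(g_{1}^{a}g_{2}^{b})$ contains a run of $1$'s occupying all heights from $i$ to $2i+1$; in particular no column of any point of $\mathcal{T}$ has an infinite tail of $1$'s, so the all-ones configuration of $\mathcal{A}^{\eta_{2}}$ is not in $\mathcal{T}^{(\eta_{2})}$. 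Yet the configuration $t$ defined by $t(g_{1}^{a}g_{2}^{b})=1$ iff $a\leq b\leq 2a$ is legal, and $\sigma_{g_{1}^{a}g_{2}^{a}}(t)=1^{a+1}0^{\infty}$ converges in $\mathcal{A}^{\eta_{2}}$ to the all-ones point while $|g_{1}^{a}g_{2}^{a}-\epsilon|=2a\to\infty$. Hence $\omega^{U}(t)\not\subseteq\mathcal{T}^{\mathcal{I}}$ for this (non-finite-type) tree-shift.

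What soft arguments do give is $\omega^{M}(t)\subseteq\overline{\mathcal{T}^{\mathcal{I}}}$, and your splicing idea does work once $\mathcal{T}$ is an $m$-step TSFT (matching memory blocks along the path lets you transplant the subtree at $v_{j'}$ to $v_{j}$ legally, and the new edge is admissible because follower sets in a Markov--Cayley tree depend only on the last letter). So the inclusion in part (2) should either be weakened to the closure of $\mathcal{T}^{\mathcal{I}}$ or be asserted under a finite-type hypothesis; as written, neither your proposal nor the paper establishes it.
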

 
Example \ref{ex2} illustrates Theorem \ref{Thm: 1} and also provides an example that shows there exists a Markov-Cayley tree $T_U$, a tree-shift $\mathcal{T}\subseteq \mathcal{A}^{T_{U}}$, two rays $p, q\in \partial T_{M}$, and a $t\in \mathcal{T}$ such that
\begin{enumerate}
    \item $\omega_q^U(t)$ is not invariant.
    \item  $\omega _{p}^{U}(t)\nsubseteq \mathcal{T}$ and $\omega_{CPS}^{U}(t) \ntrianglelefteq \mathcal{T}$.    
    \item $\omega_{CPS}^{U}(t) \ntrianglelefteq\omega _{F_{p}}^{U}(t)$.
\end{enumerate}

\begin{example}
Let $T_U$ be a Markov tree (see Figure \ref{Fig 3}). Let $\mathcal{T}=\{0,1\}^{T_U}$, $t=0^{T_U}\in\mathcal{T}$, $p=(\epsilon, g_2,g_2^2, ...)$ and $q=(\epsilon, g_1, g_1^2, ...)$ be two rays in $T_U$. Then, 

\begin{equation*}
    \begin{aligned}
        \omega^U(t)&=\{0^{T_U},0^{F_{T_U}(g_2)}\},\\
        \omega_p^U(t)&=\{ 0^{F_{T_U}(g_2)}\},\\
        \omega_q^U(t)&=\{0^{T_U}\},\\
        \omega_{F_p}^U(t)&=\{0^{F_{T_U}(g_2)}\},\\
        \omega_{CPS}^U(t)&=\{(0^{T_U}, 0^{F_{T_U}(g_2)})\}.       \end{aligned}
\end{equation*}
Since $\omega_q^U(t)=\{0^{T_U}\}$ and $\sigma_{g_2}(0^{T_U})=0^{F_{T_U}(g_2)}\notin \omega_q^U(t)$, we have that $\omega_q^U(t)$ is not invariant.
     \label{ex2} 
\end{example}

Let $\mathcal{T}$ be a tree-shift on $T_{d}$, it is proved that the set of $\{\omega _{p}^{d}(t):t\in \mathcal{T}\}$ possesses a fine structure, say internally chain transitive (ICT).

\begin{definition}[ICT property for shifts on $T_{d}$]
Let $\mathcal{T}$ be a tree-shift on $T_{d}$.

\begin{enumerate}
\item Given $\varepsilon >0$ and an $g=g_{i_{1}}g_{i_{2}}\cdots g_{i_{n}}\in T_{d}$, an $\varepsilon$\emph{-chain indexed by }$g$ is a sequence $\{t_{1},\ldots ,t_{n+1}\}$ of $\mathcal{T}$ such that $d(\sigma
_{g_{i_{j}}}(t_{j}),t_{j+1})<\varepsilon,~\forall 1\leq j\leq n$.

\item A closed subset $Y$ of $\mathcal{T}$ is \emph{internally chain transitive }($Y\in$ ICT) if for every $y$, $y^{\prime }\in Y$ and $\varepsilon >0$, there exist a $g\in T_{d}$ and an $\varepsilon $-chain $\{t_1, ..., t_{n+1}\}$ indexed by $g$ with $t_{1}=y$ and $t_{n+1}=y^{\prime}$.
\end{enumerate}
\end{definition}

In \cite[Theorem 4.23]{binder2019limitPH}, the author shows that the set ICT is closed, and in \cite[Theorem 4.25]{binder2019limitPH}, the author proves that the collection of $\omega_{p}$-limit sets in $\mathcal{T}$ belong to the set of ICT and equals to ICT when $\mathcal{T}$ is a TSFT \cite[Corollary 4.34]{binder2019limitPH}. Our goal is to extend those previous results to the class of $\mathcal{T}^{\prime}$s, which are defined on the Markov-Cayley
tree $T_{M}$. In this situation, the definition of ICT is no longer valid and we demonstrate the suitable definition below.

\begin{definition}[PICT for shifts on $T_{M}$]
Let $M$ be an irreducible matrix and $T_{M}$ is the corresponding Markov-Cayley tree.

\begin{enumerate}
\item For $\varepsilon >0$ and $g=g_{i_{1}}g_{i_{2}}\cdots g_{i_{n}}\in T_{M} $, a $\varepsilon$\emph{-projected chain indexed by }$g$ on $\mathcal{T}^{\mathcal{I}}$ is a sequence $\{t_{1},\ldots ,t_{n+1}\}$ of $\mathcal{T}^{\mathcal{I}}$ such that $d(\sigma_{g_{i_{j}}}(t_{j}),t_{j+1})<\varepsilon, \forall 1\leq j\leq n$.

\item A closed subset $\bigcup\limits_{\eta \in \mathcal{I}}Y^{(\eta )}$ of $\bigcup\limits_{\eta \in \mathcal{I}}\mathcal{T}^{(\eta)}=\mathcal{T}^{\mathcal{I}}$ is \emph{projected internally chain transitive }(write $\bigcup\limits_{\eta \in \mathcal{I}}Y^{(\eta )}\in $ PICT) if for every $y$, $y^{\prime}\in \bigcup\limits_{\eta \in \mathcal{I}}Y^{(\eta )}$ and $\varepsilon >0$, there exist a $g\in T_{M}$ and an $\varepsilon$-projected chain $\{t_1, ..., t_{n+1}\}$ of $\bigcup\limits_{\eta \in \mathcal{I}}Y^{(\eta )}$ indexed by $g$ with $t_{1}=y$ and $t_{n+1}=y^{\prime }$.
\end{enumerate}
\end{definition}
\begin{example}
     For $m,n\geq 1$, let $g=g_1^mg_2^n\in T_U$. Then, we have $F_{T_U}(g_1^i)=T_U\neq F_{T_U}(g_2)=F_{T_U}(g_1^m g_2^j)$ for all $1\leq i \leq m$ and $1\leq j\leq n$ (see Figure \ref{Fig 3}). This provides a rationale for defining PICT as replacing $\mathcal{T}$ in ICT with $\mathcal{T}^{\mathcal{I}}$. 
\end{example}
We have the following results.

\begin{theorem}\label{Thm: 5}
Let $\mathcal{T}$ be a tree-shift defined on a Markov-Cayley tree $T_{M}$. The following assertions hold true.

\begin{enumerate}
\item PICT is closed.

\item $\mathcal{W}_{p}^{M}:=\{\omega _{p}^{M}(t):t\in \mathcal{T}\}\subseteq PICT$.

\item If $\mathcal{T}$ is a TSFT, then $\mathcal{W}_{p}^{M}=PICT$.
\end{enumerate}
\end{theorem}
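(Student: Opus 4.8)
\textbf{Proof plan for Theorem \ref{Thm: 5}.}

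The plan is to adapt the three-part strategy from Binder--Meddaugh (\cite[Theorems 4.23, 4.25, Corollary 4.34]{binder2019limitPH}) to the projected setting, where the essential bookkeeping change is that a chain indexed by $g=g_{i_1}\cdots g_{i_n}$ now moves through the pieces $\mathcal{T}^{(\eta)}$ according to the follower sets $F_{T_M}(g_{i_1}\cdots g_{i_k})$ encountered along $g$, and that every ray $p\in\partial T_M$ is eventually ``trapped'' in a recurrent class of follower sets (by irreducibility of $M$). For part (1), I would fix a sequence $\bigcup_\eta Y_k^{(\eta)}\to \bigcup_\eta Y^{(\eta)}$ in the Hausdorff metric on $\mathcal{T}^{\mathcal{I}}$, each $Y_k$ in PICT, take $y,y'\in\bigcup_\eta Y^{(\eta)}$ and $\varepsilon>0$, pick $k$ large so that $Y_k$ is within $\varepsilon/4$ of $Y$, choose $y_k,y_k'\in Y_k$ within $\varepsilon/4$ of $y,y'$, take an $\varepsilon/2$-projected chain in $Y_k$ from $y_k$ to $y_k'$ indexed by some $g$, and then replace each entry $t_j$ of that chain by a nearby element of the appropriate $Y^{(\eta)}$-piece; the triangle inequality and the fact that $\sigma_{g_{i_j}}$ is (locally) a contraction by a factor $\tfrac12$ on the relevant metric give that the perturbed sequence is an $\varepsilon$-projected chain in $\bigcup_\eta Y^{(\eta)}$ from $y$ to $y'$. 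A small point to check here is that replacing the first and last entries exactly by $y,y'$ (rather than by approximants) only worsens the first and last chain inequalities by $\varepsilon/4$ each, which the $\varepsilon/2$ slack absorbs.

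For part (2), I would show $\omega_p^M(t)\in$ PICT for each $t\in\mathcal{T}$. The set $\omega_p^M(t)$ is a closed subset of $\mathcal{T}^{\mathcal{I}}$ by Theorem \ref{Thm: 1}(2); writing $p=(\epsilon,g_{i_1},g_{i_1}g_{i_2},\dots)$ and $h_k=g_{i_1}\cdots g_{i_k}$, every point of $\omega_p^M(t)$ is a limit of a subsequence $\sigma_{h_{k_j}}(t)$, and such a point lies in $\mathcal{T}^{(\eta)}$ where $\eta$ is the follower set that occurs infinitely often along $p$ (by irreducibility there may be several such $\eta$, corresponding to the recurrent follower-set classes visited by $p$). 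Given $y,y'\in\omega_p^M(t)$ and $\varepsilon>0$, pick $m$ with $y=\lim$ along $h_{m}$-approximants to within $\varepsilon$ and $n>m$ with $y'$ approximated to within $\varepsilon$ by $\sigma_{h_n}(t)$, with both $m,n$ large enough that $2^{-m},2^{-n}<\varepsilon$; then the finite piece of the orbit $\sigma_{h_m}(t),\sigma_{g_{i_{m+1}}}\sigma_{h_m}(t)=\sigma_{h_{m+1}}(t),\dots,\sigma_{h_n}(t)$ is an honest orbit segment, hence trivially an $\varepsilon$-projected chain indexed by $g=g_{i_{m+1}}\cdots g_{i_n}$, and after nudging its endpoints to $y$ and $y'$ (and, if one insists the intermediate terms lie in $\omega_p^M(t)$ rather than just in $\mathcal{T}^{\mathcal{I}}$, replacing them by nearby points of $\omega_p^M(t)$, which exist because the orbit eventually stays within $\varepsilon$ of $\omega_p^M(t)$ for the tail indices) we obtain the required $\varepsilon$-projected chain inside $\omega_p^M(t)$.

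For part (3), with $\mathcal{T}$ a TSFT, the inclusion PICT $\subseteq\mathcal{W}_p^M$ is the substantial direction. Given $\bigcup_\eta Y^{(\eta)}\in$ PICT I would build, for a cleverly chosen ray $p$, an element $t\in\mathcal{T}$ whose $\omega_p^M$-limit set is exactly $\bigcup_\eta Y^{(\eta)}$. The construction: enumerate a countable dense subset $\{y^{(1)},y^{(2)},\dots\}$ of $\bigcup_\eta Y^{(\eta)}$ with each point repeated infinitely often, and splice together, along a ray $p$, successively longer $\varepsilon_k$-projected chains ($\varepsilon_k\to0$) connecting consecutive $y^{(j)}$'s; since $\mathcal{T}$ is an $m$-step TSFT, a projected $\varepsilon$-chain with $\varepsilon<2^{-(m+1)}$ can be concatenated into an actual configuration of $\mathcal{T}$ (this is exactly the mechanism behind the shadowing/gluing property for TSFTs alluded to before Theorem \ref{Thm: 2}), so the spliced data defines a genuine $t\in\mathcal{T}$. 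One must choose $p$ so that its tail visits precisely the follower-set classes that index the pieces $Y^{(\eta)}$ (possible because $\bigcup_\eta Y^{(\eta)}\in$ PICT forces those $\eta$'s to lie in a single recurrent communicating class of follower sets, by irreducibility of $M$), and then check that $\omega_p^M(t)\subseteq\bigcup_\eta Y^{(\eta)}$ (the orbit along $p$ stays $\varepsilon_k$-close to the chains, hence to $\bigcup_\eta Y^{(\eta)}$, which is closed) and $\omega_p^M(t)\supseteq\bigcup_\eta Y^{(\eta)}$ (density plus infinite repetition of each $y^{(j)}$). The main obstacle is this concatenation step: one needs a clean ``projected gluing lemma'' for TSFTs on $T_M$ asserting that finitely many $\varepsilon$-projected chains can be realized inside a single configuration while respecting the follower-set structure at the splice points, and verifying that the $m$-step constraints are simultaneously satisfiable at every branch vertex where pieces of different follower-set type meet. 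I expect the bulk of the work, and the only genuinely new difficulty over the $T_d$ case, to be organizing this gluing so that the follower-set type of each vertex of $T_M$ agrees with the type of the piece $Y^{(\eta)}$ from which its local configuration is drawn.
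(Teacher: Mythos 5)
Your plan follows essentially the same route as the paper's proof in all three parts: (1) perturb an $\varepsilon/2$-projected chain of a nearby PICT set back into $Y$ via the triangle inequality and the factor-$2$ expansion of $d$ under $\sigma_{g_i}$; (2) first show the orbit along $p$ eventually stays $\varepsilon$-close to $\omega_p^M(t)$, then nudge an honest orbit segment onto nearby points of $\omega_p^M(t)$; (3) string together $\varepsilon_k$-projected chains (the paper uses finite $2^{-r}$-covers rather than a repeated dense enumeration, which is an immaterial difference) along a ray and glue them into a single $t\in\mathcal{T}$ using the $m$-step condition. The ``projected gluing lemma'' you flag as the main obstacle is exactly where the paper spends its effort --- the explicit definition of $t$ on and off the ray and the chain of estimates showing every $\Delta_m^{(F_{T_M}(g))}$-pattern of $t$ agrees with a pattern of some $z_{i_s}\in\mathcal{T}^{\mathcal{I}}$ --- and your identification of the mechanism (an $\varepsilon$-chain with $\varepsilon<2^{-(m+1)}$ forces agreement on $m$-blocks, so no forbidden pattern appears) is the correct one.
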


\subsection{Shadowing and asymptotically shadowing properties}

The (asymptotically) shadowing property for tree-shifts on $T_{d}$ is introduced.

\begin{definition}
Let $\mathcal{T}$ be a tree-shift defined on $T_{d}$.

\begin{enumerate}
\item An \emph{orbit} of $\mathcal{T}$ is a function $\mathcal{O}:T_{d}\rightarrow \mathcal{T}$ such that $\mathcal{O}(g)\in \mathcal{T}$ for all $g\in T_{d}$.

\item For $\delta >0$, a \emph{$\delta$-pseudo orbit} is a function $\mathcal{O}:T_{d}\rightarrow \mathcal{T}$ such that for $g\in T_{d}$ and $g_{i}\in\Sigma $ we have $d(\sigma_{g_{i}}(\mathcal{O}(g)),\mathcal{O}(gg_{i}))<\delta $.

\item A $\delta $-pseudo orbit $\mathcal{O}$ is $\varepsilon $\emph{-shadowed} by a point $t\in \mathcal{T}$ for some $\varepsilon >0$ if $d(\sigma _{g}(t),\mathcal{O}(g))<\varepsilon $ for all $g\in T_{d}$.

\item A tree-shift $\mathcal{T}$ has the \emph{shadowing property }if for all $\varepsilon >0$ there is a $\delta >0$ such that for any $\delta $-pseudo orbit $\mathcal{O}$ of $\mathcal{T}$ is $\varepsilon $-shadowed by a point $t\in\mathcal{T}$.
\end{enumerate}
\end{definition}

For $\mathcal{T}$ is a tree-shift on $T_{d}$, it is proved that $\mathcal{T}$ is an TSFT if and only if $\mathcal{T}$ has the shadowing property (\cite[Theorem 4.55]{binder2019limitPH}, \cite[Theorem 3.6]{bucki2024stability}). The well-known and analogous result of a $\mathbb{N}$ shift is extended by this result. The analogous result of the $\mathbb{Z}^{d}$ (or $\mathbb{N}^{d}$) shifts can be found in \cite{oprocha2008shadowing}. Below is the list of the new definitions of the shadowing property for the tree shift on $T_{M}$.

\begin{definition}
Let $\mathcal{T}$ be a tree-shift defined on $T_{M}$.

\begin{enumerate}
\item A \emph{projected orbit }of $\mathcal{T}$ is a function $\mathcal{O}:T_{M}\rightarrow \mathcal{T}^{\mathcal{I}}$ such that 
\[
\mathcal{O}(g)\in \mathcal{T}^{(\eta)},~\forall g\in T_{M}^{(\eta )},~\forall \eta \in\mathcal{I}.
\]

\item Let $\delta >0$, a projected orbit $\mathcal{O}$ is a $\delta$\emph{-projected pseudo orbit} of $\mathcal{T}$ if 
\[
d(\sigma_{g_{i}}(\mathcal{O}(g)), \mathcal{O}(gg_{i})) <\delta,~\forall g_i\in\Sigma \mbox{ with }gg_i\in T_M.
\]

\item Let $\varepsilon >0$, a projected pseudo orbit $\mathcal{O}$ of $\mathcal{T}$ is $\varepsilon $\emph{-shadowed} by a point $t\in \mathcal{T}$ if $d(\sigma_{g}(t),\mathcal{O}(g))<\varepsilon$ for all $g\in T_{M}$.

\item A tree-shift has the \emph{shadowing property }if for all $\varepsilon >0$ there is a $\delta >0$ such that every $\delta$-projected pseudo orbit $\mathcal{O}$ of $\mathcal{T}$ is $\varepsilon$-shadowed by a point $t\in \mathcal{T}$.
\end{enumerate}
\end{definition}

We characterize the shadowing property for tree-shifts on $T_{M}$ in Theorem \ref{Thm: 2}.

\begin{theorem}
\label{Thm: 2}Let $\mathcal{T}\subseteq \mathcal{A}^{T_{M}}$ be a tree-shift. Then $\mathcal{T}$ is a TSFT if and only if $\mathcal{T}$ has the shadowing property.
\end{theorem}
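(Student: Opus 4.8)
The plan is to prove both implications separately, following the classical template for the $\mathbb{N}$-shift case but carefully accounting for the non-invariance of tree-shifts on $T_M$ and the role of the projected tree-shift $\mathcal{T}^{\mathcal{I}}$. For the easy direction, suppose $\mathcal{T}$ has the shadowing property; I would show $\mathcal{T}$ is a TSFT by verifying that $\mathcal{T}$ is closed in $\mathcal{A}^{T_M}$ under ``patching'' of locally admissible configurations, equivalently that there is a uniform window size $m$ beyond which local admissibility forces membership. Given $\varepsilon = 2^{-1}$ (say), shadowing yields a $\delta = 2^{-N}$; the claim is that $\mathcal{T}$ is then an $N$-step TSFT. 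To see this, take any $t \in \mathcal{A}^{T_M}$ all of whose $\Delta_N^{(\eta)}$-windows (for the appropriate $\eta$ at each vertex) appear in $\mathcal{T}$; build a $\delta$-projected pseudo orbit $\mathcal{O}$ by letting $\mathcal{O}(g)$ be any element of $\mathcal{T}^{(\eta)}$ (for $g \in T_M^{(\eta)}$) agreeing with $\sigma_g(t)$ on $\Delta_N^{(\eta)}$; the pseudo-orbit condition $d(\sigma_{g_i}(\mathcal{O}(g)), \mathcal{O}(gg_i)) < \delta$ holds because both sides agree with the relevant restriction of $t$ to depth $N$. Shadowing then produces $s \in \mathcal{T}$ with $d(\sigma_g(s), \mathcal{O}(g)) < 2^{-1}$ for all $g$, i.e.\ $s_g = \mathcal{O}(g)_\epsilon = t_g$, so $t = s \in \mathcal{T}$.

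For the converse, suppose $\mathcal{T}$ is a TSFT defined by a finite forbidden set $\mathcal{F}$, say with all forbidden patterns supported on $\bigcup_\eta \Delta_m^{(\eta)}$; I would show $\mathcal{T}$ has the shadowing property with $\delta = 2^{-(m+1)}$ (or smaller) working for any $\varepsilon$, by an approximation/compactness argument. Given $\varepsilon = 2^{-k}$, pick $\delta = 2^{-(k+m+1)}$, and let $\mathcal{O}$ be a $\delta$-projected pseudo orbit. The idea is to read off from $\mathcal{O}$ a candidate configuration $t$ by setting $t_g := \mathcal{O}(g)_\epsilon$, and then show that $t \in \mathcal{T}$ and $t$ $\varepsilon$-shadows $\mathcal{O}$. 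The pseudo-orbit condition forces $\sigma_g(t)$ to agree with $\mathcal{O}(g)$ on $\Delta_{k+m}^{(\eta)}$ for $g \in T_M^{(\eta)}$: unwinding $d(\sigma_{g_i}(\mathcal{O}(g)), \mathcal{O}(gg_i)) < \delta$ along paths of length up to $k+m$ shows consecutive local pictures overlap consistently, so $t_g$ is well-defined and $\sigma_g(t)|_{\Delta_{k+m}^{(\eta)}} = \mathcal{O}(g)|_{\Delta_{k+m}^{(\eta)}}$; since each $\mathcal{O}(g) \in \mathcal{T}^{(\eta)}$, every window of $t$ of radius $\geq m$ is admissible, hence $t \in \mathcal{T}$. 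Finally $d(\sigma_g(t), \mathcal{O}(g)) \leq 2^{-(k+m)} < \varepsilon$, giving shadowing.

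The main obstacle — and the place where the Markov-Cayley setting genuinely differs from $T_d$ — is the bookkeeping of follower sets in both directions. In the forward direction one must check that the pseudo orbit $\mathcal{O}$ constructed is a legitimate \emph{projected} pseudo orbit, i.e.\ that $\mathcal{O}(g) \in \mathcal{T}^{(\eta)}$ whenever $F_{T_M}(g) = \eta$, and that such an element agreeing with $\sigma_g(t)$ on $\Delta_N^{(\eta)}$ actually exists; this uses that $\sigma_g(t)|_{\Delta_N^{(\eta)}}$ is an admissible pattern over $\eta$ together with the definition of $\mathcal{T}^{(\eta)}$ as a genuine shift space, so every admissible pattern extends. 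In the converse direction, the delicate point is that $\sigma_{g_i}$ maps $\mathcal{T}^{(\eta)}$ into $\mathcal{T}^{(\eta')}$ where $\eta' = F_{T_M}(g_i$-th child$)$ may differ from $\eta$, so the ``overlap'' argument must track which follower-set fiber each $\mathcal{O}(g)$ lives in and confirm the domains $\Delta_j^{(\eta)}$ nest compatibly under the shift. Once this is set up correctly, the rest is the standard telescoping estimate; I would also remark that the same computation yields the asymptotically $2^{-(m+1)}$-shadowing statement for $m$-step TSFTs claimed in the introduction, since the bound on $d(\sigma_g(t), \mathcal{O}(g))$ in fact improves to $0$ as the pseudo-orbit error shrinks.
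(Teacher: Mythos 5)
Your proposal is correct, and one half of it coincides with the paper's argument: for TSFT $\Rightarrow$ shadowing you define $t_g:=\mathcal{O}(g)|_{\epsilon}$ and run the same telescoping estimate the paper uses to show $\sigma_g(t)$ agrees with $\mathcal{O}(g)$ to a depth exceeding the step size $m$, so that $t\in\mathcal{T}$ and $t$ shadows $\mathcal{O}$. For the converse the routes differ. You argue directly: fix $\varepsilon$ small, get $\delta=2^{-N}$ from shadowing, take an arbitrary configuration $t$ all of whose depth-$N$ windows occur in $\mathcal{T}$ at a position with the matching follower set, assemble a $\delta$-projected pseudo orbit from witnesses of those windows, and conclude from the shadowing point that $t\in\mathcal{T}$; this exhibits $\mathcal{T}$ as the TSFT of the finite forbidden set of non-occurring depth-$N$ windows. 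The paper instead argues contrapositively: if $\mathcal{T}$ is not a TSFT, then for each $m$ there is a \emph{minimal} forbidden pattern $P$ of depth $m'>m+2$ (all of whose shallower subwindows are allowed), and it builds an explicit $\delta$-projected pseudo orbit out of points of $\mathcal{T}$ realizing the top and the shifted copies of $P$, splitting into cases according to whether the follower set carrying $P$ is $T_M$ or some $F_{T_M}(g_i)$; any $1$-shadowing point would contain $P$, a contradiction. The two arguments have the same core (a locally admissible but globally illegal configuration yields a pseudo orbit that cannot be shadowed), but yours handles all locally admissible configurations at once and avoids the case analysis on $\mathcal{I}$, at the cost of having to verify existence of the window witnesses in the correct fiber $\mathcal{T}^{(\eta)}$ --- which you correctly identify as the delicate point and which holds by definition of ``occurs at an $\eta$-position.'' Your constants have a harmless off-by-one slack (agreement on $\Delta_N$ only forces $d(\sigma_{g_i}(\mathcal{O}(g)),\mathcal{O}(gg_i))\leq 2^{-(N-1)}$ under the metric (1.3), so you should take windows of depth $N+2$ or declare $\mathcal{T}$ an $(N+2)$-step TSFT), but this does not affect the argument.
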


The asymptotically shadowing property for tree-shift $\mathcal{T}$ on $T_{M}$ is defined.

\begin{definition}
Let $\mathcal{T}$ be a tree-shift on $T_{M}$.

\begin{enumerate}
\item An \emph{asymptotically projected pseudo orbit} $\mathcal{O}$ is a projected pseudo orbit of $\mathcal{T}$ and $\lim_{\left\vert g-\epsilon\right\vert \rightarrow \infty }d(\sigma _{g_{i}}(\mathcal{O}(g)),\mathcal{O}(gg_{i}))=0$ for all $g$, $gg_{i}\in T_{M}$\footnote{That is, for every $\delta >0$, there is an integer $n$ such that $\left\vert g-\epsilon \right\vert >n$ such that $d(\sigma _{g_{i}}(\mathcal{O}(g)),\mathcal{O}(gg_{i}))<\delta $ for all $g$, $gg_{i}\in T_{M}$.}.

\item A $\delta $-projected pseudo orbit $\mathcal{O}$ is called an \emph{asymptotically }$\delta $\emph{-projected pseudo orbit} $\mathcal{O}$ of $\mathcal{T}$ if it is also an asymptotic projected pseudo orbit.

\item A projected pseudo orbit $\mathcal{O}$ of $\mathcal{T}$ is \emph{asymptotically shadowed }by a point $t\in \mathcal{T}$ if for all $\varepsilon >0$, there exists an integer $n\in \mathbb{N}$ such that $d(\sigma_{g}(t),\mathcal{O}(g))<\varepsilon $ for all $g\in T_{M}$ with $%
\left\vert g-\epsilon \right\vert >n$.

\item A tree-shift $\mathcal{T}$ has the \emph{asymptotically }$\delta$\emph{-shadowing property }if every asymptotically $\delta $-projected pseudo orbit of $\mathcal{T}$ is asymptotically shadowed by a point of $\mathcal{T}$.
\end{enumerate}
\end{definition}

We prove that a $m$-step TSFT possesses the asymptotically shadowing property. The analogous result of the tree-shift on $T_{d}$ \cite[Theorem 4.61]{binder2019limitPH} can be extended to those on $T_{M}$ by Theorem \ref{Thm: 3}. 

\begin{theorem}\label{Thm: 3}
Every $m$-step TSFT $\mathcal{T}\subseteq\mathcal{A}^{T_M}$ has the asymptotically $2^{-(m+1)}$-shadowing property.
\end{theorem}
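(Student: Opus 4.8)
\textbf{Proof proposal for Theorem \ref{Thm: 3}.}

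The plan is to adapt the classical "close-up" argument used for SFTs on $\mathbb{N}$ and on $T_d$ to the projected setting of $T_M$, exploiting that an $m$-step TSFT is determined by admissibility of patterns on blocks of depth $m+1$. First I would fix $\varepsilon = 2^{-(m+1)}$ and let $\delta = \varepsilon$ as well (this is the content of the exponent in the statement), and take an asymptotically $\delta$-projected pseudo orbit $\mathcal{O}:T_M\to\mathcal{T}^{\mathcal{I}}$. The key observation is that $d(\sigma_{g_i}(\mathcal{O}(g)),\mathcal{O}(gg_i))<2^{-(m+1)}$ means $\sigma_{g_i}(\mathcal{O}(g))$ and $\mathcal{O}(gg_i)$ agree on the ball $\Delta_{m+1}^{(\eta)}$ where $\eta=F_{T_M}(gg_i)$; in particular they agree on $\Delta_m^{(\eta)}$, which is exactly the window that controls admissibility for an $m$-step TSFT. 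So I would define a candidate shadowing point $t\in\mathcal{A}^{T_M}$ by reading off, for each $g\in T_M$, the symbol $\mathcal{O}(g)_\epsilon$ (the root value of the local orbit segment at $g$), i.e. $t_g := \mathcal{O}(g)_\epsilon$. One then checks that for every $g$ and every $1\le k\le m$, the pattern $\sigma_g(t)|_{\Delta_k^{(\eta)}}$ (with $\eta=F_{T_M}(g)$) coincides with $\mathcal{O}(g)|_{\Delta_k^{(\eta)}}$ — this is a telescoping argument: moving from $g$ to a descendant $gh$ with $|h-\epsilon|\le m$ and repeatedly applying the $\delta$-closeness condition along the path from $g$ to $gh$, the accumulated windows all agree because each step only loses one level of depth and we have $m+1$ levels of agreement per step to spare.

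The second step is to verify $t\in\mathcal{T}$. Since $\mathcal{T}$ is an $m$-step TSFT, it suffices to show $\sigma_g(t)|_{\Delta_m^{(\eta)}}$ is an admissible pattern for every $g\in T_M^{(\eta)}$ and every $\eta\in\mathcal{I}$. But by the previous paragraph this pattern equals $\mathcal{O}(g)|_{\Delta_m^{(\eta)}}$, and $\mathcal{O}(g)\in\mathcal{T}^{(\eta)}$, so it is the restriction of a genuine element of a shift of $\mathcal{T}$, hence admissible. (Here I would be careful that $\mathcal{T}^{(\eta)}$ consists precisely of the $\sigma_g$-images of points of $\mathcal{T}$ for $g\in T^{(\eta)}$, so every local pattern appearing in $\mathcal{O}(g)$ is legal.) This is where the hypothesis that $\mathcal{T}$ is $m$-step — rather than merely a TSFT — is essential: it converts "all windows of depth $m$ are legal" into "the global configuration is legal."

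The third step is the asymptotic shadowing estimate itself. Given $\varepsilon'>0$, pick $N$ so large that $2^{-(N-m)}<\varepsilon'$ and, using that $\mathcal{O}$ is \emph{asymptotically} $\delta$-projected, pick $n_0$ so that $d(\sigma_{g_i}(\mathcal{O}(g)),\mathcal{O}(gg_i))<2^{-(N+1)}$ whenever $|g-\epsilon|>n_0$. Then for $g$ with $|g-\epsilon|>n_0$, repeating the telescoping argument but now with $N+1$ levels of agreement per step (valid on the subtree past depth $n_0$) shows $\sigma_g(t)$ and $\mathcal{O}(g)$ agree on $\Delta_N^{(\eta)}$, hence $d(\sigma_g(t),\mathcal{O}(g))\le 2^{-N}<\varepsilon'$. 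Thus $t$ asymptotically shadows $\mathcal{O}$.

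The main obstacle I anticipate is the bookkeeping in the telescoping/close-up argument on $T_M$ rather than $T_d$: because the follower set $\eta=F_{T_M}(g)$ varies with $g$, the balls $\Delta_k^{(\eta)}$ change shape as one moves down the tree, and one must check that the path in $T_M$ from $g$ to a depth-$\le m$ descendant $gh$ stays inside the relevant follower sets and that the depth-loss-per-step accounting ("one level lost per edge, $m+1$ in reserve") is valid uniformly. Relatedly, one must confirm that the definition $t_g:=\mathcal{O}(g)_\epsilon$ is consistent with all the overlapping windows simultaneously — i.e. that the various local segments $\mathcal{O}(g)$ are mutually compatible to depth $m$ as a consequence of the $\delta$-condition — which is again the telescoping lemma but needs to be stated carefully as a standalone claim before it is used in Steps 2 and 3. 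Once that compatibility lemma is isolated and proved, Steps 2 and 3 are short.
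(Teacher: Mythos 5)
Your proposal is correct and follows essentially the same route as the paper: define $t_g:=\mathcal{O}(g)|_\epsilon$, use a telescoping/accumulated-error argument along paths of length at most $m$ to show each depth-$m$ window of $t$ equals the corresponding window of some $\mathcal{O}(g)\in\mathcal{T}^{(\eta)}$ (hence $t\in\mathcal{T}$ by the $m$-step hypothesis), and then rerun the same telescoping with the improved asymptotic bound past a suitable depth $n_0$ to get the asymptotic shadowing estimate. The only cosmetic difference is that the paper phrases the telescoping via the doubling inequality $d(\sigma_{g_{i_1}\cdots g_{i_k}}(\mathcal{O}(g)),\mathcal{O}(gg_{i_1}\cdots g_{i_k}))<2^{k-1}\cdot 2^{-(m+1)}$ rather than your ``one level of agreement lost per edge'' bookkeeping, which is the same computation.
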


In the remainder of this article, we provide the complete proof for Theorems \ref{Thm: 1} and \ref{Thm: 5} in Section \ref{sec2}, and Theorems \ref{Thm: 2} and \ref{Thm: 3} in Section \ref{sec3}.

\section{Proof of Theorems \ref{Thm: 1} and \ref{Thm: 5}}\label{sec2}
Before we prove the Theorem \ref{Thm: 1}, the following lemma is needed.

\begin{lemma}\label{lemma1.5}
The set $\omega^{M}(t)$ is equal to
\begin{equation}\label{lma eq 0}
\left\{ s\in \bigcup\limits_{\eta \in \mathcal{I}}\mathcal{A}^{\eta }:\forall n>0\text{, }\exists \omega_{n}\in T_{M}\text{ with }\left\vert
\omega_{n}-\epsilon \right\vert >n\ni d(\sigma_{\omega_{n}}(t),s)<n^{-1}\right\} \text{.} 
\end{equation}
\end{lemma}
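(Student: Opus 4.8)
<br>

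<br>

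The goal is to show that $\omega^M(t)$, defined as $\bigcap_{n\in\mathbb{N}}\overline{\{\sigma_g(t):g\in T_M,\ |g-\epsilon|>n\}}$, coincides with the set described in (\ref{lma eq 0}). Note that we must be careful: the sets $\sigma_g(t)$ for different $g$ live in different spaces $\mathcal{A}^{\eta}$ depending on $F_{T_M}(g)=\eta$, so "closure" and "$\bigcap$" here are taken inside the disjoint union $\bigsqcup_{\eta\in\mathcal{I}}\mathcal{A}^{\eta}$ (equivalently, a sequence converges only if its terms eventually lie in a single $\mathcal{A}^{\eta}$ and converge there in the metric (\ref{1})).

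\textbf{Plan.} I would prove the two inclusions separately. For the inclusion $\supseteq$: suppose $s\in\mathcal{A}^{\eta}$ satisfies the right-hand side condition, so for every $n$ there is $\omega_n\in T_M$ with $|\omega_n-\epsilon|>n$ and $d(\sigma_{\omega_n}(t),s)<n^{-1}$. In particular $d<1$ forces $\sigma_{\omega_n}(t)$ and $s$ to agree on $\Delta_0^{(\eta)}$, hence $F_{T_M}(\omega_n)=\eta$ and $\sigma_{\omega_n}(t)\in\mathcal{A}^{\eta}$; thus $\sigma_{\omega_n}(t)\to s$ in $\mathcal{T}^{(\eta)}$. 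For any fixed $N$, all but finitely many $\omega_n$ have $|\omega_n-\epsilon|>N$, so $s\in\overline{\{\sigma_g(t):|g-\epsilon|>N\}}$; as $N$ was arbitrary, $s\in\omega^M(t)$.

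For the inclusion $\subseteq$: take $s\in\omega^M(t)$. Then $s$ lies in some $\mathcal{A}^{\eta}$, and for each $N$, $s\in\overline{\{\sigma_g(t):|g-\epsilon|>N\}}$, so there is a sequence of elements $\sigma_{g}(t)$ with $|g-\epsilon|>N$ converging to $s$. Given $n>0$, apply this with $N=n$: pick $\sigma_{g}(t)$ within distance $n^{-1}$ of $s$ and with $|g-\epsilon|>n$, and set $\omega_n:=g$. This produces exactly the witness required by the right-hand side. The only subtlety is confirming that membership of $s$ in the closure genuinely yields points $\sigma_g(t)$ (not some limit in a different fiber) at distance $<n^{-1}$ — this is immediate from the definition of closure in a metric space once one knows $d(\sigma_g(t),s)<n^{-1}<1$ forces $\sigma_g(t)\in\mathcal{A}^{\eta}$ as above.

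\textbf{Main obstacle.} The statement itself is close to a direct unwinding of definitions, so the only real care needed is bookkeeping around the disjoint-union structure of $\mathcal{T}^{\mathcal{I}}$: one must make sure that "$d(\sigma_g(t),s)$" is even defined (it is, by (\ref{1}), whenever $\sigma_g(t)$ and $s$ lie in the same $\mathcal{A}^{\eta}$), and that the closures are taken fiberwise so that a convergent sequence is eventually trapped in one fiber. Once that is set up cleanly, both inclusions follow by the standard "closure $=$ limit of sequences" characterization in the metric space $(\mathcal{T}^{(\eta)},d)$, using $2^{-m}\to 0$ to convert "agrees on larger and larger $\Delta_m^{(\eta)}$" into "distance $<n^{-1}$" and back. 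I would state this fiberwise remark explicitly at the start of the proof and then dispatch both directions in a few lines each.
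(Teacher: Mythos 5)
Your proposal is correct and follows essentially the same route as the paper: both directions are a direct unwinding of the sequential characterization of closure in the metric \eqref{1}, extracting a witness $\omega_n$ from a convergent sequence for "$\subseteq$" and using the tail $\{\sigma_{\omega_{n+i}}(t)\}_i$ (equivalently, "all but finitely many $\omega_n$") to land in each closure for "$\supseteq$". Your explicit remark that $d(\sigma_g(t),s)<1$ pins down the fiber $\mathcal{A}^{\eta}$ is a small bookkeeping refinement the paper leaves implicit, not a different argument.
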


\begin{proof}
     For any $s\in\omega^M(t)$, by (\ref{3}), we have that for any $n\in \mathbb{N}$,
    \[
    s\in \overline{\left\{\sigma_g(t): g\in  T_M\mbox{ with } |g-\epsilon|>n\right\}}.
    \]
    Then, there exists a sequence $\{w_{n,i}\}_{i=1}^\infty\subseteq T_M$ with $|w_{n,i}-\epsilon|>n$ such that
\begin{equation}\label{lma eq 1}
    \lim_{i\to\infty}d(\sigma_{w_{n,i}}(t),s)=0.
\end{equation}
  By (\ref{lma eq 1}), there exists $m(n)\in \mathbb{N}$ such that if $i\geq m(n)$, then 
    \[
    d\left(\sigma_{w_{n,i}}(t), s\right)<n^{-1}.
    \]
    Let $w_n=w_{n,m(n)}\in T_M$. We have $|w_n-\epsilon|>n$ and 
    \[
    d\left(\sigma_{w_n}(t), s\right)=d\left(\sigma_{w_{n,m(n)}}(t), s\right)<n^{-1}.
    \]
    Thus, $s$ in (\ref{lma eq 0}).

    Conversely, if $s$ in (\ref{lma eq 0}), then $s\in \bigcup_{\eta \in \mathcal{I}}\mathcal{A}^{\eta }$ such that 
    \begin{equation*}
      \forall n>0,\exists w_n\in T_M\mbox{ with } |w_n-\epsilon|>n \ni d(\sigma_{w_n}(t),s)<n^{-1}.  
    \end{equation*}
    Note that for any $n\in\mathbb{N}$, $\{\sigma_{w_{n+i}}(t)\}_{i=1}^\infty\subseteq
    \left\{\sigma_g(t): g\in  T_M\mbox{ with } |g-\epsilon|>n\right\}$ (since $|w_{n+i}-\epsilon|>n+i>n$)
    and 
    \[
    \lim_{i\to\infty} d\left(\sigma_{w_{n+i}}(t), s\right)<\lim_{i\to\infty} (n+i)^{-1}=0.
    \]
    Hence, \[s\in \overline{\left\{\sigma_g(t): g\in  T_M\mbox{ with } |g-\epsilon|>n\right\}},~\forall n\in \mathbb{N}.
    \] 
    Thus, $s\in\omega^M(t)$.
\end{proof}

\begin{proof}[Proof of Theorem \ref{Thm: 1}]  
\item[\bf (1)] We first prove that $\omega^M(t)$ is invariant. For $s\in\omega^M(t)$ and $n>0$, according to Lemma \ref{lemma1.5}, there exists $g\in T_M$ with $|g-\epsilon|>2n$ such that $d(\sigma_{g}(t), s )<(2n)^{-1}$. Then, for any $g_i\in\Sigma$ with $gg_i\in T_M$, we have
\begin{equation*}
d\left(\sigma_{gg_i}(t),\sigma_{g_i}(s)\right)<2\cdot d\left(\sigma_g(t),(s)\right)<2\cdot (2n)^{-1}=n^{-1}
\end{equation*}
Moreover, $|gg_i-\epsilon|=|g-\epsilon|+1>2n+1>n$. By Lemma \ref{lemma1.5} again, $\sigma_{g_i}(s)\in \omega^M(t)$ for all $\sigma_{g_i}(s) \neq \emptyset$ hence $\omega^M(t)$ is invariant. 

The proof of invariance of $\omega_{F_p}^M(t)$ is similar to $\omega^M(t)$, so we omit it here.

\item[\bf (2)] The proof is directly obtained by the definitions of $\omega^M(t)$, $ \omega_p^M(t)$ and $\omega_{F_p}^M(t)$. 

\item[\bf (3)]
Let $T_M$ be a Markov Cayley tree, and $\mathcal{T}$ be a tree-shift on $T_M$ and $t\in \mathcal{T}$. If $\omega_{CPS}^M(t)=\emptyset$, then it is clear that $\omega_{CPS}^M(t)\trianglelefteq\omega^M(t)$. If $\omega_{CPS}^M(t)\neq \emptyset$, then for any $v\in \omega_{CPS}^M(t)$, there exist $1\leq \ell \leq |\mathcal{I}|$ and $\xi_1,...,\xi_{\ell}\in \mathcal{I}$ such that 
\[
v=(t_1,..., t_\ell)\in \omega_{CPS}^{M;[\xi_1:\xi_{\ell}]}(t),
\]
and for any $n>0$, there exists $C_{n}\in\mathcal{C}^{[\xi_{1}:\xi_{\ell}]}$ with $\underline{m}_{C_n} >n$ such that
\begin{equation*}
   \forall g\in C_n,~\mbox{if }F_{T_M}(g)=\xi_i,\mbox{ then }d(\sigma_g(t),t_i)<n^{-1}. 
\end{equation*}
This implies that for all $1\leq i \leq \ell$ and for such fixed $n$, there exists an $w_i\in C_n$ with $F_{T_M}(w_i)=\xi_i$ and $|w_i-\epsilon|>n$ such that 
\[
d(\sigma_{w_i}(t),t_i)<n^{-1}.
\]
By Lemma \ref{lemma1.5}, we have 
\[
t_i\in \omega^M(t),~\forall 1\leq i\leq \ell.
\]
Thus, $\omega_{CPS}^M(t)\trianglelefteq \omega^M(t)$.
\item[\bf (4)] The proof of (\ref{6}) is quickly obtained by (\ref{5}).
\end{proof}

\begin{proof}[Proof of Theorem \ref{Thm: 5}]
\item[\bf (1)]
Let $\bigcup_{\eta\in\mathcal{I}} Y^{(\eta)}\in \overline{\rm PICT}$, we claim that $\bigcup_{\eta\in\mathcal{I}} Y^{(\eta)}\in {\rm PICT}$. For $\varepsilon>0$, choosing $\bigcup_{\eta\in\mathcal{I}}  
    Z^{(\eta)}\in {\rm  PICT}$ with 
    \begin{equation}\label{eq 1}
    d'\left(\bigcup_{\eta\in\mathcal{I}} Y^{(\eta)},\bigcup_{\eta\in\mathcal{I}} Z^{(\eta)}\right)<\frac{\varepsilon}{6},
    \end{equation}
    where $d'(\bigcup_{\eta\in\mathcal{I}}  Y^{(\eta)}, \bigcup_{\eta\in\mathcal{I}} Z^{(\eta)}):=\max\{d(a,b): a\in Y^{(\eta)},b\in Z^{(\eta)}, \eta\in\mathcal{I}\}$ (for simplicity, we write $d'=d$). By (\ref{eq 1}), for any $y, y^\prime\in \bigcup_{\eta\in\mathcal{I}} Y^{(\eta)}$, there exist $z, z^\prime\in \bigcup_{\eta\in\mathcal{I}} Z^{(\eta)}$ such that $d(y,z)<\frac{\varepsilon}{6}$ and $d(y^\prime, z^\prime)<\frac{\varepsilon}{6}$.
    
     Since $\bigcup_{\eta\in\mathcal{I}} Z^{(\eta)}\in {\rm PICT}$, there exists an $\frac{\varepsilon}{2}$-projected chain ($\frac{\varepsilon}{2}$-PC) of $\bigcup_{\eta\in\mathcal{I}} Z^{(\eta)}$ indexed by $g_{i_1}g_{i_2}\cdots g_{i_n}\in T_M$, and denoted by $\{z=z_1, z_2, ..., z_{n+1}=z^\prime\}$, such that $d(\sigma_{g_{i_j}}(z_j), z_{j+1}) < \frac{\varepsilon}{2}$ for all $1\leq j \leq n$. 
     
     By (\ref{eq 1}) again, we can choose $y=y_1, y_2, ..., y_{n+1}=y^\prime\in \bigcup_{\eta\in\mathcal{I}} Y^{(\eta)}$ such that 
    \begin{equation*}
    d\left(y_i, z_i\right)<\frac{\varepsilon}{6}, \forall 1\leq i \leq n+1.
    \end{equation*}   
    Then, for $1\leq j\leq n$, 
    \begin{align*}
        d\left(\sigma_{g_{i_j}}(y_j), y_{j+1}\right)&\leq d\left(\sigma_{g_{i_j}}(y_j),\sigma_{g_{i_j}}(z_j) \right)+d\left(\sigma_{g_{i_j}}(z_j),z_{j+1}\right)+d\left(z_{j+1},y_{j+1}\right)\\
        &<2\cdot d\left(y_j, z_j \right)+\frac{\varepsilon}{2}+\frac{\varepsilon}{6}\\
        &<2\cdot\frac{\varepsilon}{6}+\frac{\varepsilon}{2}+\frac{\varepsilon}{6}=\epsilon.
    \end{align*} 
    Thus, $\{y=y_1, y_2, ..., y_{n+1}=y^\prime\}$ is an $\varepsilon$-PC of $\bigcup_{\eta\in\mathcal{I}} Y^{(\eta)}$ indexed by $g_{i_1}g_{i_2}\cdots g_{i_n}\in T_M$. Hence, $\bigcup_{\eta\in\mathcal{I}} Y^{(\eta)}\in {\rm PICT}$. Therefore, PICT is closed.

\item[\bf (2)]We first claim that $\forall \varepsilon>0,~ p=(\epsilon, g_{i_1}, g_{i_1}g_{i_2}, ...)\in\partial  T_M$ and $t\in \mathcal{T}$, there exists an $N(\varepsilon)>0 $ such that if $n\geq N$ then $d(\sigma_{g_{i_1}\cdots g_{i_n}}(t),\omega_p^M(t))<\varepsilon$. Arguing contrapositively, we have an increasing sequence of integers $\{n_j\}_{j=1}^\infty$ with 
 \[
 d\left(\sigma_{g_{i_1}\cdots g_{i_{n_j}}}(t),\omega_p^M(t)\right) \geq \varepsilon.
 \]
 By passing to a subsequence if necessary, $\{\sigma_{g_{i_1}\cdots g_{i_{n_j}}}(t)\}_{j=1}^\infty$ converges to a point $s\in \omega_p^M(t)$. This is contradictory to $d(s, \omega_p^M(t)) \geq \varepsilon$. This ends the proof of the claim.
 
 Let $\omega_p^M(t)\in \mathcal{W}_p^M$ and $\varepsilon >0$. By the above claim, there exists an $N>0$ such that if $n>N$ then
\begin{equation}\label{eq 1.10-2-1}
    d\left(\sigma_{g_{i_1}\cdots g_{i_n}}(t),\omega_p^M(t)\right)<\frac{\varepsilon}{3}.
\end{equation}

 By (\ref{eq 1.10-2-1}), for any $y,y^\prime \in \omega_p^M(t)$, there are $m,n>N$ such that
\begin{equation*}
d\left(\sigma_{g_{i_1}\cdots g_{i_n}}(t), y\right)<\frac{\varepsilon}{3} \text{ and } d\left(\sigma_{g_{i_1}\cdots g_{i_m}}(t), y^\prime\right)<\frac{\varepsilon}{3}.
\end{equation*}

W.L.O.G, let $n<m$. By (\ref{eq 1.10-2-1}) again, for any $n< j< m$, there is $y_j\in  \omega_p^M(t)$ such that 
 \begin{equation*}
    d\left(\sigma_{g_{i_1}\cdots g_{i_j}}(t),y_j\right)<\frac{\varepsilon}{3}.
\end{equation*}
Denoting $y=y_n$ and $y^\prime=y_m$, we now verify that $\{y=y_n, y_{n+1}, ..., y_{m}=y^\prime\}$ is an $\varepsilon$-PC of $\omega_p^M(t)$ indexed by $g=g_{i_n}\cdots g_{i_m}$. For $n\leq j< m$,
\begin{align*}
    d\left(\sigma_{g_{i_{j+1}}}(y_j),y_{j+1}\right)
    \leq &d\left(\sigma_{g_{i_{j+1}}}(y_j),\sigma_{g_{i_1}\cdots g_{i_{j+1}}}(t)\right)+d\left(\sigma_{g_{i_1}\cdots g_{i_{j+1}}}(t),y_{j+1}\right)\\
    \leq &2\cdot d\left(y_j,\sigma_{g_{i_1}\cdots g_{i_j}}(t)\right)+d\left(\sigma_{g_{i_1}\cdots g_{i_{j+1}}}(t),y_{j+1}\right)\\
    <& 2\cdot \frac{\varepsilon}{3}+\frac{\varepsilon}{3}=\epsilon.
\end{align*}
Thus, $\omega_p^M(t)\in {\rm PICT}$. 
\item[\bf (3)] Let $\mathcal{T}$ be a $m$-step TSFT. By (2) of Theorem \ref{Thm: 5}, we have $\mathcal{W}_p^M\subseteq$ PICT. It remains to show that PICT $\subseteq \mathcal{W}_p^M$. Let $\bigcup_{\eta\in\mathcal{I}} Y^{(\eta)}\in$ PICT, we claim that
\[
\bigcup_{\eta\in\mathcal{I}} Y^{(\eta)}=\omega_p^M(t)\mbox{ for some }t\in\mathcal{T}\mbox{ and }p\in\partial T_M.
\]
For $r\geq m+1$, let $\{x_i^r\}_{i=0}^{n_r}\subseteq \bigcup_{\eta\in\mathcal{I}} Y^{(\eta)}$ be a sequence that $2^{-r}$ covers $\bigcup_{\eta\in\mathcal{I}} Y^{(\eta)}$ (That is, $\forall x\in \bigcup_{\eta\in\mathcal{I}} Y^{(\eta)}$, $\exists 0\leq i\leq n_r \ni d(x,x_i^r)<2^{-r}$). Such $n_r$ is finite since 
\begin{equation*}
    \left|\bigcup_{\eta\in\mathcal{I}}\left\{y|_{\Delta_r^{(\eta)}}: y\in Y^{(\eta)}\right\}\right|\leq
    \sum_{\eta\in\mathcal{I}}\left|\left\{y|_{\Delta_r^{(\eta)}}: y\in Y^{(\eta)}\right\}\right|\leq 
    \sum_{\eta\in\mathcal{I}} |\mathcal{A}|^{|\Delta_r^{(\eta)}|}<\infty.
\end{equation*}

Since $\bigcup_{\eta\in\mathcal{I}} Y^{(\eta)}\in$ PICT, there exists a $2^{-r}$-PC from $x_i^{r}$ to $x_{i+1}^{r}$ indexed by $u_i$ where $u_i$ begins with $x_i^{r}$ and ends with $x_{i+1}^{r}$. By concatenating these chains, we can obtain a $2^{-r}$-PC $\{x_0^{r}=y_0^{r},...,y_{\ell_r}^r=x_{n_r}^r\}$ from $x_0^r$ to $x_{n_r}^r$ indexed by $v_1^r\cdots v_{\ell_r}^r$ and for each $0\leq i\leq n_r$ there exists $0\leq j \leq \ell_r$ such that $x_i^r=y_j^r$.

Let
\begin{equation*}
   \{z_i\}_{i=0}^\infty:= \left\{y_0^{m+1}, ..., y_{\ell_{m+1}}^{m+1}=y_0^{m+2}, y_1^{m+2}, ..., y_{\ell_{m+2}}^{m+2}=y_0^{m+3}, y_1^{m+3}, ...\right\},
\end{equation*}
and let 
\begin{align*}
    p&:=(\epsilon, v_1^{m+1}, v_1^{m+1}v_2^{m+1}, ..., v_1^{m+1}\cdots v_{\ell_{m+1}}^{m+1}, v_1^{m+1}\cdots v_{\ell_{m+1}}^{m+1}v_2^{m+2}, ...)\\
    &:=(\epsilon, g_{i_1}, g_{i_1}g_{i_2}, ...).
\end{align*}
Then, $\{z_i\}_{i=0}^\infty\subseteq \bigcup_{\eta\in\mathcal{I}} Y^{(\eta)}$ and $p\in \partial T_M$. Note that
\begin{equation}\label{01}
    d\left(\sigma_{g_{i_{j+1}}}(z_j),z_{j+1}\right)<2^{-(m+1)},~\forall j\in\mathbb{N},
\end{equation}
and 
\begin{equation}\label{02}
    \forall n>m+1,~\exists k_n \ni d\left(\sigma_{g_{i_{j+1}}}(z_j),z_{j+1}\right)<2^{-n},~\forall j>k_n. 
\end{equation}

Define 
\begin{align*}
    t|_{g}&:=z_0|_{g},~\forall g\in T_M\mbox{ with }g\neq g_{i_1}g',\\
   \sigma_{g_{i_1}\cdots g_{i_n}}(t)|_{\epsilon}&:=z_{n}|_{\epsilon},~\forall n\in \mathbb{N},\\
    \sigma_{g_{i_1}\cdots g_{i_j} g}(t)|_{g}&:=z_{j}|_{g},~\forall g_{i_1}\cdots g_{i_j} g\in T_M\mbox{ with }g\neq g_{i_{j+1}}g'.
\end{align*}
Since for $g=g_{j}g'\in T_M$ with $g_j\neq g_{i_1}$, we have
\begin{equation}\label{03}
    \sigma_g(t)|_{\Delta_{m}^{(F_{T_M}(g))}}=\sigma_g(z_0)|_{\Delta_{m}^{(F_{T_M}(g))}}\notin \mathcal{F}.
\end{equation}
Since for $g=g_{i_1}\cdots g_{i_s}g_jg'\in T_M$ ($s\geq 1$) with $g_j\neq g_{i_{s+1}}$, we have
\begin{equation}\label{04}
    \sigma_g(t)|_{\Delta_m^{(F_{T_M}(g))}}=\sigma_{g_jg'}(z_{i_s})|_{\Delta_m^{(F_{T_M}(g))}}\notin \mathcal{F}.
\end{equation}
Since for $g=\epsilon$ and $u\in \Delta_m^{(T_M)}$, if $u=g_{i_1}\cdots g_{i_\ell}$ ($1\leq \ell\leq m$), then, by (\ref{01}), we have
\begin{equation}\label{06}
    \begin{aligned}
    &d(z_{i_\ell}, \sigma_u(z_0))=d(z_{i_\ell},\sigma_{g_{i_1}\cdots g_{i_\ell}}(z_0))\\
    \leq &d(z_{i_\ell}, \sigma_{g_{i_{\ell}}}(z_{i_{\ell-1}}))+d(\sigma_{g_{i_{\ell}}}(z_{i_{\ell-1}}),\sigma_{g_{i_{\ell}}}(\sigma_{g_{i_{\ell-1}}}(z_{i_{\ell-2}})))  \\
    &+\cdots +d(\sigma_{g_{i_2}\cdots g_{i_\ell}}(z_{i_1}),\sigma_{g_{i_2}\cdots g_{i_\ell}}(\sigma_{g_{i_1}}(z_0)))\\
    \leq &d(z_{i_\ell}, \sigma_{g_{i_{\ell}}}(z_{i_{\ell-1}}))+2\cdot d(z_{i_{\ell-1}},\sigma_{g_{i_{\ell-1}}}(z_{i_{\ell-2}}))  \\
    &+\cdots +2^{\ell-1}\cdot d(z_{i_1},\sigma_{g_{i_1}}(z_0))\\
    <&2^{-(m+1)}(1+\cdots +2^{\ell-1})\leq 2^{-(m+1)}(1+\cdots +2^{m-1})<1.
\end{aligned}
\end{equation}
Thus,
\begin{equation}\label{05}
    \sigma_u(t)|_{\epsilon}=\sigma_{g_{i_1}\cdots g_{i_s}}(t)|_{\epsilon}=z_{i_s}|_{\epsilon}=\sigma_u(z_0)|_{\epsilon}.
\end{equation}
If $u=g_{i_1}\cdots g_{i_s}g_{\ell}g'$ with $g_{\ell}\neq g_{i_{s+1}}$, then, by (\ref{06}), we have
\begin{align*}
    &d(\sigma_{g_\ell g'}(z_{i_s}), \sigma_u(z_0))=d(\sigma_{g_\ell g'}(z_{i_s}),\sigma_{g_{i_1}\cdots g_{i_s}g_{\ell}g'}(z_0))\\
    \leq &2^{|g_\ell g'-\epsilon|}\cdot d(z_{i_s}, \sigma_{g_{i_1}\cdots g_{i_s}}(z_0))\\
    <&2^{|g_\ell g'-\epsilon|}\cdot 2^{-(m+1)}(1+\cdots +2^{s-1})<1.~(\mbox{since }|g_\ell g'-\epsilon|+s\leq m) 
\end{align*}
Thus,
\begin{equation}\label{07}
    \sigma_u(t)|_{\epsilon}=\sigma_{g_{i_1}\cdots g_{i_s}g_{\ell}g'}(t)|_{\epsilon}=\sigma_{g_{\ell}g'}(z_{i_s})|_{\epsilon}=\sigma_u(z_0)|_{\epsilon}.
\end{equation}
Hence by (\ref{05}) and (\ref{07}), we obtain
\begin{equation}\label{012}
    t|_{\Delta_m^{(T_M)}}=z_0|_{\Delta_m^{(T_M)}}\notin \mathcal{F}.
\end{equation}
Since for $g=g_{i_1}\cdots g_{i_s}$ ($s\geq 1$) and $u\in \Delta_m^{(F_{T_M}(g))}$, if $u=g_{i_{s+1}}\cdots g_{i_{s+\ell}}$ ($1\leq \ell \leq m$), then, by (\ref{01}), we have
\begin{equation}\label{08}
    \begin{aligned}
    &d(z_{i_{s+\ell}}, \sigma_u(z_{i_s}))=d(z_{i_{s+\ell}},\sigma_{g_{i_{s+1}}\cdots g_{i_{s+\ell}}}(z_{i_s}))\\
    \leq &d(z_{i_{s+\ell}}, \sigma_{g_{i_{s+\ell}}}(z_{i_{s+\ell-1}}))+d(\sigma_{g_{i_{s+\ell}}}(z_{i_{s+\ell-1}}),\sigma_{g_{i_{s+\ell}}}(\sigma_{g_{i_{s+\ell-1}}}(z_{i_{s+\ell-2}})))  \\
    &+\cdots +d(\sigma_{g_{i_{s+2}}\cdots g_{i_{s+\ell}}}(z_{i_{s+1}}),\sigma_{g_{i_{s+2}}\cdots g_{i_{s+\ell}}}(\sigma_{g_{i_{s+1}}}(z_{i_s})))\\
    \leq &d(z_{i_{s+\ell}}, \sigma_{g_{i_{s+\ell}}}(z_{i_{s+\ell-1}}))+2\cdot d(z_{i_{s+\ell-1}},\sigma_{g_{i_{s+\ell-1}}}(z_{i_{s+\ell-2}}))  \\
    &+\cdots +2^{\ell-1}\cdot d(z_{i_{s+1}},\sigma_{g_{i_{s+1}}}(z_{i_s}))\\
    <&2^{-(m+1)}(1+\cdots +2^{\ell-1})\leq 2^{-(m+1)}(1+\cdots +2^{m-1})<1.
\end{aligned}
\end{equation}
Thus,
\begin{equation}\label{09}
    \sigma_{gu}(t)|_{\epsilon}=\sigma_{g_{i_1}\cdots g_{i_{s+\ell}}}(t)|_{\epsilon}=z_{i_{s+\ell}}|_{\epsilon}=\sigma_u(z_{i_s})|_{\epsilon}.
\end{equation}
If $u=g_{i_{s+1}}\cdots g_{i_{s+\ell}}g_{j}g'$ with $g_{j}\neq g_{i_{s+\ell+1}}$, then, by (\ref{08}), we have
\begin{align*}
    &d(\sigma_{g_j g'}(z_{i_{s+\ell}}), \sigma_u(z_{i_s}))=d(\sigma_{g_j g'}(z_{i_{s+\ell}}),\sigma_{g_{i_{s+1}}\cdots g_{i_{s+\ell}}g_j g'}(z_{i_s}))\\
    \leq &2^{|g_j g'-\epsilon|}\cdot d(z_{i_{s+\ell}}, \sigma_{g_{i_{s+1}}\cdots g_{i_{s+\ell}}}(z_{i_s}))\\
    <&2^{|g_j g'-\epsilon|}\cdot 2^{-(m+1)}(1+\cdots +2^{\ell-1})<1.~(\mbox{since }|g_j g'-\epsilon|+\ell\leq m) 
\end{align*}
Thus,
\begin{equation}\label{010}
    \sigma_{gu}(t)|_{\epsilon}=\sigma_{g_{i_1}\cdots g_{i_{s+\ell}}g_j g'}(t)|_{\epsilon}=\sigma_{g_j g'}(z_{i_{s+\ell}})|_{\epsilon}=\sigma_u(z_{i_s})|_{\epsilon}.
\end{equation}
Hence by (\ref{09}) and (\ref{010}), we obtain
\begin{equation}\label{011}
    t|_{g\Delta_m^{(F_{T_M}(g))}}=z_{i_s}|_{\Delta_m^{(F_{T_M}(g))}}\notin \mathcal{F}.
\end{equation}
Therefore, by (\ref{03}), (\ref{04}), (\ref{012}) and (\ref{011}), we have $t\in \mathcal{T}$. Now, we already construct a $\omega_p^M(t)$ with above $p\in\partial T_M$ and $t\in \mathcal{T}$. It remains to verify that 
\[
\bigcup_{\eta\in\mathcal{I}} Y^{(\eta)}=\omega_p^M(t).
\]

 For $y\in \bigcup_{\eta\in\mathcal{I}} Y^{(\eta)}$ and $n'>m+1$, since $\{x_i^r\}_{i=0}^{n_r}$ is $2^{-r}$ covering of $ \bigcup_{\eta\in\mathcal{I}} Y^{(\eta)}$, we have that there exist $r\geq n'+1$ and $0\leq s\leq n_r$ such that 
\begin{equation}\label{013}
    d\left(y, x_s^r\right)<2^{-(n'+1)},
\end{equation}
and ,by (\ref{02}) and definitions of $t$ and $p$, there exists $j>k_{n'+1}$ such that 
\begin{equation}\label{014}
    \sigma_{g_{i_1}\cdots g_{i_j}}(t)|_{\Delta_{n'+1}^{(F_{T_M}(g_{i_j}))}}=x_s^r|_{\Delta_{n'+1}^{(F_{T_M}(g_{i_j}))}}.
\end{equation}
By (\ref{013}) and (\ref{014}), we have
\begin{equation*}
    d(y, \sigma_{g_{i_1}\cdots g_{i_j}}(t))\leq d(y,x_s^r)+d(x_s^r, \sigma_{g_{i_1}\cdots g_{i_j}}(t))<2^{-(n'+1)}+2^{-(n'+1)}=2^{-n'}.
\end{equation*}
Thus, $y\in \omega_p^M(t)$.

Conversely, for $y\in \omega_p^M(t)$, by the definition of $ \omega_p^M(t)$, we have
\begin{equation*}
    \forall n'>m+1,~ \exists \{y_{n',i}\}_{i=1}^\infty\subseteq \{\sigma_{g_{i_1}\cdots g_{i_j}}(t): j\geq k_{n'}\}\ni\lim_{i\to\infty} d(y,y_{n',i})=0,
\end{equation*}
where $k_{n'}$ defined in (\ref{02}). Then, by (\ref{02}), there exists $y_{n'}\in \{y_{n',i}\}_{i=1}^\infty$ such that  
\begin{equation}\label{015}
    d(y,y_{n'})<2^{-n'},
\end{equation}
where $y_{n'}=\sigma_{g_{i_1}\cdots g_{i_{s(n')}}}(t)$ for some $s(n')\geq k_{n'}$.

Thus, by (\ref{015}),
\begin{equation*}
    y|_{\Delta_{n'}^{(F_{T_M}(g_{i_{s(n')}}))}}=y_{n'}|_{\Delta_{n'}^{(F_{T_M}(g_{i_{s(n')}}))}}= z_{s(n')}|_{\Delta_{n'}^{(F_{T_M}(g_{i_{s(n')}}))}},
\end{equation*}
where $z_{s(n')}\in  \bigcup_{\eta\in\mathcal{I}} Y^{(\eta)}$. Hence, 
\begin{equation*}
    d(y,z_{s(n')})<2^{-n'}.
\end{equation*}
Thus,
\begin{equation}\label{016}
   \lim_{n'\to\infty}  d(y,z_{s(n')})=0.
\end{equation}
Since $z_{s(n')} \in \bigcup_{\eta\in\mathcal{I}} Y^{(\eta)}$ and $\bigcup_{\eta\in\mathcal{I}} Y^{(\eta)}$ is closed, and (\ref{016}), we have $y\in \bigcup_{\eta\in\mathcal{I}} Y^{(\eta)}$. The proof is complete.
\end{proof}

\section{Proof of Theorems \ref{Thm: 2} and \ref{Thm: 3}}\label{sec3}

\begin{proof}[Proof of Theorem \ref{Thm: 2}]
    We first prove that if $\mathcal{T}$ is a TSFT then $\mathcal{T}$ has the shadowing property. Let $\mathcal{T}$ be an $m$-step TSFT with the forbidden set $\mathcal{F}\subseteq \cup_{\eta\in\mathcal{I}}\mathcal{A}^{\Delta_m^{(\eta)}}$. 
  
  For any $\varepsilon>0$, let $\delta=2^{-s}<\min\{2^{-m}, \epsilon\}$ ($s\in\mathbb{Z}$) and let $\mathcal{O}$ be a $\delta$-projected pseudo orbit ($\delta$-PPO) of $\mathcal{T}$. Define $t\in\mathcal{A}^{T_M}$ by
    \[
    t|_g =\mathcal{O}(g)|_{\epsilon},~\forall g\in T_M.
    \]
    We claim that $t\in \mathcal{T}$, that is, no member of $\mathcal{F}$ appears in $t$. For any $g\in  T_M$, since $\mathcal{O}$ is a $\delta$-PPO, we have
\begin{equation*}
        d(\sigma_{g_i}(\mathcal{O}(g)), \mathcal{O}(gg_i))<\delta,~\forall g_i\in\Sigma \text{ with }gg_i\in T_M.
    \end{equation*}
Then, for $gg_{i_1}\cdots g_{i_k}\in T_M$ with $1\leq k\leq s$, we have
\begin{equation*}
   d(\sigma_{g_{i_1}\cdots g_{i_k}}(\mathcal{O}(g)), \mathcal{O}(gg_{i_1}\cdots g_{i_k}))< 2^{k-1}\cdot \delta <2^{-1}.
\end{equation*}
This implies that
\begin{equation*}
    \sigma_{g_{i_1}\cdots g_{i_k}}(\mathcal{O}(g))|_{\epsilon}=\mathcal{O}(gg_{i_1}\cdots g_{i_k})|_{\epsilon}.
\end{equation*}
Thus, for any $u=g_{i_1}\cdots g_{i_k}\in \Delta_s^{(F_{T_M}(g))}$, we have 
\begin{equation*}
    \sigma_u(\mathcal{O}(g))|_{\epsilon}=\mathcal{O}(gu)|_{\epsilon}=: t|_{gu}.
\end{equation*}
Hence,    
    \begin{equation}\label{12}
        t|_{g\Delta_s^{(F_{T_M}(g))}}=\mathcal{O}(g)|_{\Delta_s^{(F_{T_M}(g))}}.
    \end{equation}
Since $m<s$, we have      
    \begin{equation*}
        t|_{g\Delta_m^{(F_{T_M}(g))}}=\mathcal{O}(g)|_{\Delta_m^{(F_{T_M}(g))}}\notin \mathcal{F}.
    \end{equation*}
    Therefore, $t\in \mathcal{T}$. Furthermore, by (\ref{12}), we have that $\forall g\in T_M$,
    \[
    d\left(\sigma_g(t),\mathcal{O}(g)\right)<2^{-s}<\varepsilon.
    \]
    Therefore, $\mathcal{O}$ is $\varepsilon$-shadowed by a point $t\in \mathcal{T}$, and hence $\mathcal{T}$ has the shadowing property.
    
Conversely, arguing contrapositively, if $\mathcal{T}$ is not a TSFT, then we claim that for any $\delta>0$, there is a $\delta$-PPO of $\mathcal{T}$ which can not be $1$-shadowed by any point in $\mathcal{T}$. Note that if $\mathcal{T}$ is not a TSFT, then $\mathcal{T}$ is not a $m$-step TSFT for all $m\geq 0$. Then, for any $m\geq 1$, there exist a positive integer $m'>m+2$ and a member $P\in\mathcal{F}$ with $P\in \cup_{\eta\in\mathcal{I}}\mathcal{A}^{\Delta_{m'}^{(\eta)}}$ such that no member of $\mathcal{F}\cap \cup_{\eta\in\mathcal{I}}\mathcal{A}^{\Delta_{n}^{(\eta)}}$ appears in $P$ for all $0\leq n\leq m'-1$. 
    
Fix a $\delta>0$, choosing $1\leq m\in \mathbb{N}$ such that $\delta> 2^{-m}$. Then, there exist an $m'>m+2$ and a member $P\in\mathcal{F}\cap\cup_{\eta\in\mathcal{I}}\mathcal{A}^{\Delta_{m'}^{(\eta)}}$ as above. Note that $T_M$ is a Markov-Cayley tree, $\mathcal{I}=\{T_M, F_{T_M}(g_1), ..., F_{T_M}(g_d) \}$. Thus, $P\in\mathcal{A}^{\Delta_{m'}^{(\eta)}}$ for some $\eta\in\{T_M,F_{T_M}(g_1), ..., F_{T_M}(g_d)\}$. 

If $\eta=T_M$, then there exist points $t_0 \in \mathcal{T}^{(T_M)}$ and $t_i\in \mathcal{T}^{(F_{T_M}(g_i))}$ ($1\leq i\leq d$) such that 
    \begin{equation*}
        t_0|_{\Delta_{m'-1}^{(\eta)}}=P|_{\Delta_{m'-1}^{(\eta)}}\mbox{ and }t_i|_{\Delta_{m'-1}^{(F_{T_M}(g_i))}}=P|_{g_i\Delta_{m'-1}^{(F_{T_M}(g_i))}}.
    \end{equation*}
Define
    \begin{align*}
        \mathcal{O}(\epsilon)&:=t_0,\\
        \mathcal{O}(g_i)&:=t_i,~(\forall 1\leq i\leq d),\\
        \mathcal{O}(g_ig)&:=\sigma_g(t_i),~(\forall g_ig \in T_M). 
    \end{align*}
Since 
\begin{align*}
    \sigma_{g_i}(\mathcal{O}(\epsilon))|_{\Delta_{m'-2}^{(F_{T_M}(g_i))}}=&\sigma_{g_i}(t_0)|_{\Delta_{m'-2}^{(F_{T_M}(g_i))}}
    =P|_{g_i\Delta_{m'-2}^{(F_{T_M}(g_i))}}\\
=&t_i|_{\Delta_{m'-2}^{(F_{T_M}(g_i))}}
=\mathcal{O}(g_i)|_{\Delta_{m'-2}^{(F_{T_M}(g_i))}},
\end{align*}
we have 
   \begin{equation}\label{13}
        d\left(\sigma_{g_i}(\mathcal{O}(\epsilon)),\mathcal{O}(g_i)\right)\leq 2^{-(m'-2)}<2^{-m}<\delta.
   \end{equation}
Since for $g=g_ig',gg_j\in T_M$,
\begin{align*}
    \sigma_{g_j}(\mathcal{O}(g))&=\sigma_{g_j}(\mathcal{O}(g_ig'))=\sigma_{g_j}(\sigma_{g'}(t_i))\\
    &=\sigma_{g'g_j}(t_i)=\mathcal{O}(g_ig'g_j)=\mathcal{O}(gg_j),
\end{align*}
we have
\begin{equation}\label{14}
    d(\sigma_{g_j}(\mathcal{O}(g)),\mathcal{O}(gg_j))=0<\delta,~ \forall 1\leq i\leq d.
\end{equation} 
Hence by (\ref{13}) and (\ref{14}), $\mathcal{O}$ is a $\delta$-PPO of $\mathcal{T}$. Now, if $\mathcal{O}$ is 1-shadowed by a point $t\in \mathcal{T}$, then
       \begin{equation}\label{19}
           t|_{g}=\mathcal{O}(g)|_{\epsilon},~\forall g\in T_M.
       \end{equation}
        This implies $t|_{\Delta_{m'}^{(\eta)}}=P$. Since $P\in\mathcal{F}$, we have $t\notin \mathcal{T}$, which is contradictory to $t\in \mathcal{T}$. 
        
         If $\eta=F_{T_M}(g_i)$ for some $1\leq i\leq d$, then there exist points $s_0\in \mathcal{T}^{(T_M)}$ and $s_j\in \mathcal{T}^{(F_{T_M}(g_j))}$ ($\forall g_ig_j\in T_M$) such that
\begin{equation*}
    s_0|_{g_i\Delta_{m'-1}^{(F_{T_M}(g_i))}}=P|_{\Delta_{m'-1}^{(F_{T_M}(g_i))}}\mbox { and }s_j|_{\Delta_{m'-1}^{(F_{T_M}(g_j))}}
=P|_{g_j\Delta_{m'-1}^{(F_{T_M}(g_j))}}.
\end{equation*}
Define
\begin{align*}
    \mathcal{O}(g)&:=\sigma_g(s_0),~\mbox{if }g\in T_M\mbox{ and }g\neq g_ig',\\
    \mathcal{O}(g_i)&:=\sigma_{g_i}(s_0),\\
    \mathcal{O}(g_ig_jg)&:=\sigma_{g}(s_j),~\mbox{if }g_ig_jg\in T_M.
\end{align*}
Since
\begin{align*}
\sigma_{g_\ell}(\mathcal{O}(\epsilon))=\sigma_{g_\ell}(s_0)=\mathcal{O}(g_\ell),
\end{align*}
we have 
\begin{equation}\label{15}
    d(\sigma_{g_\ell}(\mathcal{O}(\epsilon)),\mathcal{O}(g_\ell))=0<\delta,~\forall 1\leq \ell\leq d.
\end{equation}
Since for $g, gg_\ell\in T_M$ with $g\neq g_ig'$,
\begin{equation*}
    \sigma_{g_{\ell}}(\mathcal{O}(g))=\sigma_{g_\ell}(\sigma_g(s_0))=\sigma_{gg_\ell}(s_0)=\mathcal{O}(gg_\ell),
\end{equation*}
we have
\begin{equation}\label{16}
    d(  \sigma_{g_{\ell}}(\mathcal{O}(g)),\mathcal{O}(gg_\ell))=0<\delta.
\end{equation}
Since for $g_ig_\ell\in T_M$,
\begin{align*}
    \sigma_{g_\ell}(\mathcal{O}(g_i))|_{\Delta_{m'-2}^{(F_{T_M}(g_\ell))}}&=  \sigma_{g_\ell}(\sigma_{g_i}(s_0))|_{\Delta_{m'-2}^{(F_{T_M}(g_\ell))}}=\sigma_{g_ig_\ell}(s_0))|_{\Delta_{m'-2}^{(F_{T_M}(g_\ell))}}\\
    &=P|_{g_\ell\Delta_{m'-2}^{(F_{T_M}(g_\ell))}}=s_\ell|_{\Delta_{m'-2}^{(F_{T_M}(g_\ell))}}=\mathcal{O}(g_ig_\ell)|_{\Delta_{m'-2}^{(F_{T_M}(g_\ell))}},
\end{align*}
we have
\begin{equation}\label{17}
    d(\sigma_{g_\ell}(\mathcal{O}(g_i)),\mathcal{O}(g_ig_\ell))\leq 2^{-(m'-2)}<2^{-m}<\delta.
\end{equation}
Since for $g_ig_j gg_\ell\in T_M$,
\begin{equation*}
    \sigma_{g_\ell}(\mathcal{O}(g_ig_j g))=  \sigma_{g_\ell}(\sigma_{g}(s_j))=\sigma_{gg_\ell}(s_j)=\mathcal{O}(g_ig_jgg_\ell),
\end{equation*}
we have
\begin{equation}\label{18}
    d(  \sigma_{g_\ell}(\mathcal{O}(g_ig_j g)),\mathcal{O}(g_ig_jgg_\ell))=0<\delta.
\end{equation}

Hence by (\ref{15}), (\ref{16}), (\ref{17}) and (\ref{18}), $\mathcal{O}$ is a $\delta$-PPO $\mathcal{O}$ of $\mathcal{T}$. Now, if $\mathcal{O}$ is 1-shadowed by a point $t\in \mathcal{T}$, then by (\ref{19}), we have $t|_{g_i\Delta_{m'}^{(F_{T_M}(g_i))}}=P\in \mathcal{F}$. Thus, $t\notin \mathcal{T}$. The proof is complete. 
\end{proof}

\begin{proof}[Proof of Theorem \ref{Thm: 3}]
   Let $\mathcal{T}\subseteq \mathcal{A}^{(T_M)}$ be an $m$-step TSFT. For any asymptotic $2^{-(m+1)}$-projected pseudo orbit ($2^{-(m+1)}$-APPO) $\mathcal{O}$ of $\mathcal{T}$, define
   \[
   t|_{g}:=\mathcal{O}(g)|_{\epsilon}, ~\forall g\in  T_M.
   \]
  
   We claim that $t\in \mathcal{T}$ and $\mathcal{O}$ is asymptotically shadowed by $t$. Since $\mathcal{O}$ is $2^{-(m+1)}$-APPO hence $2^{-(m+1)}$-PPO, we have 
   \begin{equation*}
       t|_{g\Delta_{m}^{(F_{T_M}(g))}}=\mathcal{O}(g)|_{\Delta_{m}^{(F_{T_M}(g))}}\notin\mathcal{F},~\forall g\in T_M.
   \end{equation*}
   Since $\mathcal{T}$ is an $m$-step TSFT, we have $t\in \mathcal{T}$. It remains to show that $\mathcal{O}$ is asymptotically shadowed by $t$. For any $\delta>0$, since $\mathcal{O}$ is $2^{-(m+1)}$-APPO hence APPO, there exists $n\in\mathbb{N}$ such that $\forall g, gg_i\in T_M$ with $|g-\epsilon|>n$, 
   \begin{equation}\label{20}
       d(\sigma_{g_i}(\mathcal{O}(g)),\mathcal{O}(gg_i))<\delta\cdot 2^{-1}.
   \end{equation}
    Let $m^\prime\in \mathbb{N}$ such that $2^{-(m'+1)}< \delta\cdot 2^{-1} \leq 2^{-m'}$. Equation (\ref{20}) gives that
   \[\mathcal{O}(g)|_{g'}=\mathcal{O}(gg')|_{\epsilon},~\forall |gg'-g|\leq m',~\forall g, gg'\in T_M\mbox{ with }|g-\epsilon|>n. 
   \]
    Thus, 
   \[
   t|_{g\Delta_{m'}^{(F_{T_M}(g))}}=\mathcal{O}(g)|_{\Delta_{m'}^{(F_{T_M}(g))}},~\forall g\in T_M\mbox{ with }|g-\epsilon|>n. 
   \]
   Hence, 
   \[
   d\left(\sigma_g(t),\mathcal{O}(g)\right)\leq 2^{-m'}<\delta,~\forall g\in T_M\mbox{ with }|g-\epsilon|>n. 
   \]
   The proof is complete.
\end{proof}

\section*{Acknowledgements}
Ban is partially supported by the National Science and Technology Council, ROC (Contract NSTC 111-2115-M-004-005-MY3). Lai is partially supported by the National Science and Technology Council, ROC (Contract NSTC 111-2811-M-004-002-MY2).
\bibliographystyle{amsplain}
\bibliography{ban}

\providecommand{\bysame}{\leavevmode\hbox to3em{\hrulefill}\thinspace}
\providecommand{\MR}{\relax\ifhmode\unskip\space\fi MR }
\providecommand{\MRhref}[2]{%
  \href{http://www.ams.org/mathscinet-getitem?mr=#1}{#2}
}
\providecommand{\href}[2]{#2}
\begin{thebibliography}{10}

\bibitem{aubrun2012tree}
N.~Aubrun and M.-P. Beal, \emph{Tree-shifts of finite type}, Theoretical
  Computer Science \textbf{459} (2012), 16--25.

\bibitem{ban2017mixing}
J.-C. Ban and C.-H. Chang, \emph{Tree-shifts: Irreducibility, mixing, and chaos
  of tree-shifts}, Transactions of the American Mathematical Society
  \textbf{369} (2017), 8389--8407.

\bibitem{ban2017tree}
\bysame, \emph{Tree-shifts: The entropy of tree-shifts of finite type},
  Nonlinearity \textbf{30} (2017), 2785--2804.

\bibitem{barwell2012shadowing}
A.~Barwell, C.~Good, and P.~Oprocha, \emph{Shadowing and expansivity in
  subspaces}, Fundamenta Mathematicae \textbf{219} (2012), no.~3, 223--243.

\bibitem{binder2019limitPH}
K.~Binder, \emph{Limit sets in finitely-generated free group and monoid
  actions}, Ph.D. thesis, 2019.

\bibitem{binder2019limit}
K.~Binder and J.~Meddaugh, \emph{Limit sets and internal transitivity in free
  group actions}, arXiv preprint arXiv:1908.07382 (2019).

\bibitem{brian2015chain}
W.~R. Brian, J.~Meddaugh, and B.~E. Raines, \emph{Chain transitivity and
  variations of the shadowing property}, Ergodic Theory and Dynamical Systems
  \textbf{35} (2015), no.~7, 2044--2052.

\bibitem{bruin2022topological}
H.~Bruin, \emph{Topological and ergodic theory of symbolic dynamics}, vol. 228,
  American Mathematical Society, 2022.

\bibitem{bucki2024stability}
D.~Bucki, \emph{On the stability and shadowing of tree-shifts of finite type},
  arXiv preprint arXiv:2403.04456 (2024).

\bibitem{good2018orbital}
C.~Good and J.~Meddaugh, \emph{Orbital shadowing, internal chain transitivity
  and-limit sets}, Ergodic Theory and Dynamical Systems \textbf{38} (2018),
  no.~1, 143--154.

\bibitem{good2020shadowing}
C.~Good, J.~Meddaugh, and J.~Mitchell, \emph{Shadowing, internal chain
  transitivity and $\alpha$-limit sets}, Journal of Mathematical Analysis and
  Applications \textbf{491} (2020), no.~1, 124291.

\bibitem{kulczycki2014almost}
M.~Kulczycki, D.~Kwietniak, and P.~Oprocha, \emph{On almost specification and
  average shadowing properties}, Fundamenta Mathematicae \textbf{224} (2014),
  no.~3, 241--278.

\bibitem{kwietniak2012note}
D.~Kwietniak and P.~Oprocha, \emph{A note on the average shadowing property for
  expansive maps}, Topology and its Applications \textbf{159} (2012), no.~1,
  19--27.

\bibitem{li2013shadowing}
J.~Li and P.~Oprocha, \emph{Shadowing property, weak mixing and regular
  recurrence}, Journal of Dynamics and Differential Equations \textbf{25}
  (2013), 1233--1249.

\bibitem{meddaugh2013shadowing}
J.~Meddaugh and B.~E. Raines, \emph{Shadowing and internal chain transitivity},
  Fundamenta Mathematicae \textbf{222} (2013), no.~3, 279--287.

\bibitem{meddaugh2014structure}
\bysame, \emph{The structure of limit sets for $\mathbb{Z}^d$ actions},
  Discrete \& Continuous Dynamical Systems \textbf{34} (2014), no.~11,
  4765--4780.

\bibitem{oprocha2008shadowing}
P.~Oprocha, \emph{Shadowing in multi-dimensional shift spaces}, Colloquium
  Mathematicum, vol. 110, Instytut Matematyczny Polskiej Akademii Nauk, 2008,
  pp.~451--460.

\bibitem{PS-2017complexity}
K.~Petersen and I.~Salama, \emph{Tree shift topological entropy}, Theoretical
  Computer Science \textbf{743} (2018), 64--71.

\bibitem{petersen2020entropy}
\bysame, \emph{Entropy on regular trees}, Discrete \& Continuous Dynamical
  Systems \textbf{40} (2020), no.~7, 4453.

\bibitem{pilyugin2006shadowing}
S.~Y. Pilyugin, \emph{Shadowing in dynamical systems}, Springer, 2006.

\bibitem{pilyugin2003shadowing}
S.~Y. Pilyugin and S.~B. Tikhomirov, \emph{Shadowing in actions of some abelian
  groups}, Fundamenta Mathematicae \textbf{179} (2003), no.~1, 83--96.

\bibitem{souza2012limit}
J.~Souza, \emph{On limit behavior of semigroup actions on noncompact spaces},
  Proceedings of the American Mathematical Society \textbf{140} (2012), no.~11,
  3959--3972.

\end{thebibliography}

\end{document}